\theoremstyle{plain}
\newtheorem{thm}{Theorem}[section]
\newtheorem{prop}{Proposition}[section]
\newtheorem{lem}{Lemma}[section]
\newtheorem{cor}{Corollary}[section]
\theoremstyle{definition}
\theoremstyle{remark}
\newtheorem{rem}{Remark}[section]
\newtheorem{ex}{Example}[section]
\numberwithin{equation}{section}
\newcommand{\Z}{\mathbb{Z}}
\newcommand{\R}{\mathbb{R}}
\newcommand{\C}{\mathbb{C}}
\newcommand{\Sph}{\mathbb{S}}
\newcommand{\pa}{\partial}
\newcommand{\eps}{\varepsilon}
\newcommand{\jb}[1]{\langle #1 \rangle}
\DeclareMathOperator{\supp}{\rm supp}
\newcommand{\dal}{\Box}
\newcommand{\dis}{\displaystyle}
\newcommand{\Sum}{\sideset{}{'}\sum}  
\newcommand{\ZP}{\Z_{+}}
\begin{document}
\title{
 Energy decay for systems of semilinear wave equations 
 with dissipative structure in two space dimensions 
 }

\author{
    Soichiro Katayama 
     \thanks{
      Department of Mathematics, Wakayama University. 
      930 Sakaedani, Wakayama 640-8510, Japan. 
      (E-mail: {\tt katayama@center.wakayama-u.ac.jp})
     }
    \and 
    Akitaka Matsumura 
     \thanks{
      Department of Pure and Applied Mathematics, 
      Graduate School of Information Science and Technology, 
      Osaka University.
      Toyonaka, Osaka 560-0043, Japan. 
      (E-mail: {\tt akitaka@ist.osaka-u.ac.jp})
     }
    \and  
    Hideaki Sunagawa
     \thanks{
       Department of Mathematics, Graduate School of Science, 
       Osaka University. 
       Toyonaka, Osaka 560-0043, Japan. 
       (E-mail: {\tt sunagawa@math.sci.osaka-u.ac.jp})
     }
}

\date{\today}   
\maketitle

\vspace{-4mm}\begin{center}
  Dedicated to Professor Shuichi Kawashima 
  on the occasion of his sixtieth birthday\\
\end{center}\vspace{2mm}

\noindent{\bf Abstract:}\ 
We consider the Cauchy problem for systems of semilinear 
wave equations in two space dimensions. We present a structural 
condition on the nonlinearity under which the energy decreases to 
zero as time tends to infinity if the Cauchy data are sufficiently 
small, smooth and compactly-supported.  
\\

\noindent{\bf Key Words:}\ 
Nonlinear wave equations;  Energy decay
\\

\noindent{\bf 2010 Mathematics Subject Classification:}\ 
35L71; 35B40
\\



\section{Introduction and the main result}

We consider the Cauchy problem for a system of semilinear wave equations
in two space dimensions:
\begin{align}
 \dal u=F(\pa u), 
  \qquad (t,x)\in (0,\infty)\times \R^2,
  \label{eq}
\end{align}
with 
\begin{align}
 u(0,x)=\eps f(x),\ (\pa_t u)(0,x)=\eps g(x), 
  \qquad 
  x\in \R^2,
\label{data}
\end{align}
where 
$u=(u_j)_{1\le j \le N}$ is an $\R^N$-valued unknown function
of $(t,x)\in [0,\infty) \times \R^2$, 
$\Box=\pa_t^2-\Delta_x=\pa_t^2-\pa_{1}^2-\pa_{2}^2$,
and $\pa u=(\pa_a u_j)_{0\le a \le 2, 1\le j \le N}$ 
with the notation
$$
\pa_0=\pa_t=\frac{\pa}{\pa t},\ \ 
\pa_1=\frac{\pa}{\pa x_1},\ \ 
\pa_2=\frac{\pa}{\pa x_2}.
$$
For simplicity, \scalebox{0.95}[1]{we always suppose that 
$f=(f_j)_{1\le j \le  N}$ and $g =(g_j)_{1\le j \le  N}$ 
belong to $C_0^{\infty}(\R^2; \R^N)$}, and that $\eps$ is 
positive and sufficiently small. 
The nonlinear term  
$F(\pa u)=\bigl(F_j(\pa u)\bigr)_{1\le j \le N}$ is 
assumed to be a quadratic smooth function around $\pa u=0$: 
To be more precise, we assume that 
$F \in C^\infty(\R^{3N};\R^N)$ and 
$$
F(\pa u)=F^{\rm q}(\pa u)+F^{\rm c}(\pa u)+O(|\pa u|^4)
$$
in a neighborhood of $\pa u=0$, where the quadratic nonlinear term 
$F^{\rm q}(\pa u)=\bigl(F_j^{\rm q}(\pa u)\bigr)_{1\le j\le N}$ 
and the cubic nonlinear term 
$F^{\rm c}(\pa u)=\bigl(F_j^{\rm c}(\pa u)\bigr)_{1\le j\le N}$ 
are given by
\begin{align*}
 F_j^{\rm q}(\pa u)
 = &
 \sum_{k,l=1}^N \sum_{a,b=0}^2 B_{jkl}^{ab} (\pa_a u_k)(\pa_b u_l),
\\
 F_j^{\rm c}(\pa u)
 = & \sum_{k,l,m=1}^{N}\sum_{a,b,c=0}^{2}  C_{jklm}^{abc} 
 (\pa_a u_k)(\pa_b u_l)(\pa_c u_m)
\end{align*}
with some real constants $B_{jkl}^{ab}$ and $C_{jklm}^{abc}$.
In order to state our conditions, we define the {\it reduced nonlinearity}
$$
F^{\rm q, red}(\omega, Y)
 =\bigl(F_j^{\rm q, red}(\omega, Y)\bigr)_{1\le j \le N}
\text{ and }
F^{\rm c, red}(\omega, Y)
=\bigl(F_j^{\rm c, red}(\omega, Y)\bigr)_{1\le j\le N}
$$
by
\begin{align}
 F_j^{\rm q, red}(\omega, Y)
 = & \sum_{k,l=1}^N\sum_{a,b=0}^2 B_{jkl}^{ab}\omega_a\omega_b Y_k Y_l,
 \nonumber
 \\
 F_j^{\rm c, red}(\omega, Y)
 = & \sum_{k,l,m=1}^{N}\sum_{a,b,c=0}^{2}  C_{jklm}^{abc} 
 \omega_a \omega_b \omega_c Y_k Y_l Y_m
 \label{ReducedNonlinearityC}
\end{align}
for $Y=(Y_j)_{1\le j \le N} \in \R^N$ 
and 
$\omega=(\omega_1,\omega_2) \in \Sph^1$, with the convention $\omega_0=-1$.

It is known that the Cauchy problem \eqref{eq}--\eqref{data} admits a unique 
global solution for small initial data if $F(\pa u)=O(|\pa u|^4)$ near 
$\pa u=0$; however this is not true when we consider general cubic or 
quadratic nonlinearity. Thus the cubic nonlinearity is critical for small data 
global existence in two space dimensions, and we need some structural 
restriction on quadratic and cubic parts of $F$ to obtain global solutions 
for small initial data. 
Alinhac~\cite{Ali01a} proved that if the {\it null condition} 
(or the {\it quadratic null condition})
\begin{equation}
 \label{null2}
 F^{\rm q, red} (\omega, Y)=0,\quad  
 (\omega, Y) \in \Sph^1 \times \R^N
\end{equation}
and the {\it cubic null condition}
\begin{equation}
 \label{null3}
 F^{\rm c, red} (\omega, Y)=0,\quad  
 (\omega, Y) \in \Sph^1 \times \R^N
\end{equation}
are satisfied, then the Cauchy problem \eqref{eq}--\eqref{data} admits a 
unique global solution for small $\eps$. 
More precisely, only the case of single quasi-linear equations is 
treated in \cite{Ali01a}, but we can easily adopt the method in \cite{Ali01a} 
to the system \eqref{eq} (see \cite{Hos06a}). 
It is also known that the null condition \eqref{null2} without \eqref{null3} 
implies the so-called almost global existence for \eqref{eq}--\eqref{data}.
For the related results concerning the quadratic and cubic null conditions in 
two space dimensions, we refer the readers to \cite{Ali01b}, \cite{Go}, 
\cite{Hos95}, \cite{Hos-Kub00}, \cite{Hos-Kub05}, \cite{Kat93} and 
\cite{Kat95}. The (quadratic) null condition was originally introduced by 
Klainerman~\cite{Kla86} as a sufficient condition for small data global 
existence in three space dimensions (see also Christodoulou~\cite{Chr}); 
the cubic null condition is not needed then, 
because the critical nonlinearity is quadratic in three space dimensions.

Concerning single wave equations with cubic nonlinearity in two space
dimensions, Agemi \cite{Age96} introduced another structural condition 
being weaker than the cubic null condition, and conjectured that the small data 
global existence would  follow from his condition. Let $N=1$ and 
$F^{\rm q}(\pa u)\equiv 0$ for a while. Then $F^{\rm c, red}$ has the form 
$F^{\rm c, red}(\omega, Y)=P(\omega) Y^3$ with a polynomial $P$ of cubic order. Agemi's condition is:
\begin{equation}
P(\omega)\ge 0,\quad \omega\in \Sph^1.
\label{Agemi01}
\end{equation}
Observe that \eqref{Agemi01} is equivalent to
\begin{equation}
 YF^{\rm c, red}(\omega, Y)\ge 0, \quad 
 (\omega,Y) \in \Sph^1 \times \R, 
\label{Agemi02}
\end{equation}
and that the cubic null condition \eqref{null3} implies \eqref{Agemi02}.
The Agemi conjecture was solved 
independently by Hoshiga~\cite{Hos} and Kubo~\cite{Ku}: 
Namely, for \eqref{eq}--\eqref{data} with $N=1$ and 
$F^{\rm q}(\pa u)\equiv 0$, it was proved 
that \eqref{Agemi01} implies global existence of solutions for small $\eps$.
For example, the Agemi condition \eqref{Agemi01} is satisfied for the 
nonlinearity $F(\pa u)=-(\pa_t u)^3$, but the cubic null condition is violated 
for this nonlinearity. 
Asymptotic behavior of global solutions under the Agemi condition 
\eqref{Agemi01} was studied in \cite{Ku} and improved in \cite{KMuS}.  
In particular, it was proved in \cite{KMuS} that 
the energy of the global solution $u$ decreases to zero as $t \to \infty$ 
if the inequality in \eqref{Agemi01} is strict, i.e., 
\begin{equation}
 \label{KMuSCond}
 P(\omega)>0,\quad \omega\in \Sph^1. 
\end{equation}
In other words, $F$ satisfying $\eqref{KMuSCond}$ serves as 
a nonlinear dissipation (at least for small data). A typical example 
satisfying \eqref{KMuSCond} is $F(\pa u)=-(\pa_t u)^3$, for which 
$P(\omega)=1$. It should be emphasized that the general theory of nonlinear 
dissipation in Mochizuki--Motai~\cite{MM} does not cover the case of 
$\dal u=-(\pa_t u)^3$ in two space dimensions (see also \cite{TY} 
and the references cited therein for the theory of nonlinear dissipation). 

In this paper, we will unify two global existence results mentioned above:
One is the global existence result under the
quadratic and cubic null conditions \eqref{null2}--\eqref{null3} in 
\cite{Ali01a}; another is the result under the Agemi condition \eqref{Agemi01} 
and $F^{\rm q}(\pa u)\equiv 0$ in \cite{Hos} and \cite{Ku}. 
We will also investigate a condition, corresponding to \eqref{KMuSCond}, 
to ensure that the nonlinearity works as nonlinear dissipation for systems.

Now we would like to introduce our condition:
\begin{enumerate}
\item[(Ag)] There is an $N\times N$-matrix valued continuous function 
${\mathcal A}={\mathcal A}(\omega)$ on $\Sph^1$ such that 
${\mathcal A}(\omega)$ is a positive-definite symmetric matrix for each 
$\omega\in \Sph^1$, and that
$$
 Y \cdot {\mathcal A}(\omega) F^{\rm c, red}(\omega, Y)\ge 0,
 \quad (\omega, Y)\in \Sph^1 \times \R^N,
$$
where the symbol $\,\cdot\,$ denotes the standard inner product in 
$\R^N$.
\end{enumerate}

Here and in what follows, $\R^N$-vectors are always regarded as column vectors.
Observe that the cubic null condition \eqref{null3} implies (Ag) with 
${\mathcal A}(\omega)=I_N$, where $I_N$ is the $N\times N$ identity matrix. 
Observe also that (Ag) with $N=1$ coincides with the Agemi condition 
\eqref{Agemi01}, because we have \eqref{Agemi02}, and ${\mathcal A}(\omega)$ 
in (Ag) plays no essential role when $N=1$. 
Thus we may say that the condition (Ag) is the Agemi condition for systems. 

\begin{thm}  \label{Thm_SDGE}
Suppose that the quadratic null condition \eqref{null2} 
and the condition {\rm (Ag)} are satisfied. 
Then, for any $f, g \in C_0^{\infty}(\R^2;\R^N)$, 
there exists a positive constant $\eps_0$ such that 
the Cauchy problem \eqref{eq}--\eqref{data} admits a unique global $C^{\infty}$-solution
$u$ for $(t,x)\in [0,\infty) \times \R^2$ 
if $\eps \in (0,\eps_0]$. 
\end{thm}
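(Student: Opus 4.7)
My plan is to establish the a priori bound
\[
 M_s(T) := \sup_{t\in[0,T]} \sum_{|\alpha|\le s} \|\pa Z^\alpha u(t)\|_{L^2(\R^2)} \le C_0\eps
\]
on any interval $[0,T]$ of smooth existence, for a sufficiently large integer $s$ and a constant $C_0$ depending only on the data and $F$. Here $Z$ ranges over the Klainerman generators $\pa_a$, $\Omega=x_1\pa_2-x_2\pa_1$, $L_a=x_a\pa_t+t\pa_a$, and $S=t\pa_t+x\cdot\nabla_x$, which commute with $\dal$ up to harmless multiples. Combined with the standard local well-posedness for \eqref{eq}--\eqref{data}, this will yield the global $C^\infty$-solution. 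The strategy is a bootstrap based on Alinhac's ghost-weight energy method, weighted by (a smoothing of) the matrix $\mathcal A(\omega)$ of condition (Ag), so that the cubic nonlinearity can be absorbed by the sign it provides.

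Concretely, I first smooth $\mathcal A$ on $\Sph^1$, keeping it positive definite and preserving (Ag) up to an error that can be absorbed by adding a small $\eta I_N$, and extend it to a smooth symmetric positive-definite matrix $\widetilde{\mathcal A}(x)$ on $\R^2$ with $\widetilde{\mathcal A}(x)=\mathcal A(x/|x|)$ for $|x|\ge 1$. Let $q$ be a bounded smooth function with $q'(\sigma)=\jb{\sigma}^{-2}$, and for each multi-index $\alpha$ define the weighted energy
\[
 E_\alpha(t) = \int_{\R^2} e^{-q(|x|-t)}\,\pa Z^\alpha u \cdot \widetilde{\mathcal A}(x)\,\pa Z^\alpha u\, dx.
\]
Differentiating in time and inserting $\dal Z^\alpha u=Z^\alpha F(\pa u)+[\dal,Z^\alpha]u$ yields the usual ghost-weight identity: the leading positive contribution is the tangential term $\int_0^T\!\!\int e^{-q}\jb{r-t}^{-2}|TZ^\alpha u|^2\,dx\,dt$, where $T Z^\alpha u$ collects the good derivatives $\pa_a Z^\alpha u + \omega_a\pa_t Z^\alpha u$ (for $a=1,2$) tangent to the outgoing cone, and the nonlinear source is $\int e^{-q}\,\pa_t Z^\alpha u \cdot \widetilde{\mathcal A}\, Z^\alpha F(\pa u)\,dx$ plus commutator and $\nabla\widetilde{\mathcal A}$ contributions.

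The quadratic null condition \eqref{null2} rewrites $F^{\rm q}$ as a linear combination of classical null forms, each of which factors schematically as $|T u||\pa Z^\beta u|+\jb{r-t}\jb{r+t}^{-1}|\pa Z^\alpha u||\pa Z^\beta u|$, so the quadratic integrals are absorbed by the tangential term and the 2D Klainerman--Sobolev bound $|\pa Z^\alpha u(t,x)|\le C\jb{t+r}^{-1/2}\jb{t-r}^{-1/2}M_{|\alpha|+2}(t)$. For the cubic part, the decomposition $\pa_a u = -\omega_a\pa_t u + T_a u$ (using $\omega_0=-1$ so that $T_0 u=0$) gives, modulo tangential errors,
\[
 F^{\rm c}(\pa u) = -F^{\rm c,red}(\omega,\pa_t u) + R .
\]
Inserted at the base energy level this produces the principal contribution $-2\int e^{-q}\,\pa_t u\cdot\mathcal A(\omega)F^{\rm c,red}(\omega,\pa_t u)\,dx\le 0$ by (Ag), so the cubic nonlinearity acts as a nonlinear dissipation at the base level; the remainder $R$ is absorbed by the tangential term together with the interior pointwise decay $|\pa u|\le C\jb{t}^{-1}M_s(t)$ on $|x|\le t/2$ (which also takes care of the smoothing of $\mathcal A$ near $x=0$).

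The main obstacle is propagating this sign to the higher-order energies $E_\alpha$ with $|\alpha|\ge 1$. After Leibniz, the top-order cubic contribution to $\tfrac{d}{dt}E_\alpha$ reduces to
\[
 -6\int e^{-q}\,\pa_t Z^\alpha u \cdot \mathcal A(\omega)\,F^{\rm c,red}_{\rm sym}(\omega;\pa_t u,\pa_t u,\pa_t Z^\alpha u)\,dx ,
\]
where $F^{\rm c,red}_{\rm sym}$ is the symmetric trilinear polarisation of $F^{\rm c,red}$. The hypothesis (Ag) provides sign only on the diagonal $(Y,Y,Y)$, and for $N\ge 2$ the polarised form is in general indefinite as a quadratic form in its third slot, so the top-order cubic contribution does not inherit a sign automatically. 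Overcoming this is the heart of the argument: one must either use the tangential $L^2_{t,x}$ control to absorb the indefinite part, introduce a cubic normal-form-type correction to the energy that cancels the bad polarised term, or perform an asymptotic reduction to a radiation-field ODE for which (Ag) supplies exactly the sign needed for the top-order matching. Once this delicate step is carried out and the remainders are closed by Gronwall, the bootstrap $M_s(t)\le C_0\eps$ closes for $\eps\le\eps_0$ small, proving the theorem.
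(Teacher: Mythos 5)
You have correctly located the crux, but you have not crossed it. Your argument stops exactly at the step you yourself call ``the heart of the argument'': after Leibniz, the top-order cubic term involves the symmetric polarisation of $F^{\rm c,red}$, which (Ag) does not control, and at that point you merely list three possible strategies (absorption by the tangential term, a normal-form correction of the energy, or an asymptotic reduction to a radiation-field ODE) without carrying any of them out. As written, the bootstrap does not close, so there is a genuine gap. There are two further problems with the set-up itself. First, inserting the $\omega$-dependent weight $\widetilde{\mathcal A}(x)$ into the energy produces, after the integration by parts in the ghost-weight identity, error terms containing $\nabla\widetilde{\mathcal A}=O(|x|^{-1})$ times $|\pa Z^\alpha u|^2$; near the light cone this is a quadratic term with only $t^{-1}$ decay and a coefficient that is \emph{not} small, so by itself it forces $E_\alpha(t)\lesssim \eps^2 t^{C}$ with a fixed constant $C$, which is incompatible with your target bound $M_s(T)\le C_0\eps$. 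Second, that target is stronger than what this problem admits by such direct arguments: even under the null condition, the higher-order $2$D energies are only controlled with a slow growth $t^{C\sqrt{\eps}+C\nu}$, and one should not expect (nor does one need) uniform boundedness of all commuted energies.

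For comparison, the paper resolves your polarisation difficulty by \emph{not} asking the energy to see the sign at all. The bootstrap quantity is a weighted pointwise norm $e[u](T)$, not an energy; the $L^2$ estimates use Alinhac's ghost weight with the plain (unweighted by $\mathcal A$) energy and the null-form estimates, and are allowed to grow like $\jb{t}^{C_*\sqrt{\eps}+2l\nu}$. Pointwise decay is recovered from H\"ormander's $L^1$--$L^\infty$ estimate, and the condition (Ag) enters only once, at the \emph{lowest} order, through the ODE along rays for the radiation profile $U=D_-(r^{1/2}u)$: the inequality $\frac{d}{dt}\bigl(V\cdot\mathcal A(\omega)V\bigr)\le 2V\cdot\mathcal A(\omega)H$ gives the sharp bound $|U|\le C\eps\jb{t-r}^{\mu-1}$. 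The commuted profiles $U^{(\alpha)}$ satisfy a \emph{linearised} ODE whose coefficient $\frac{1}{2t}\pa_Y F^{\rm c,red}(\omega,U)$ is $O(\eps^2/t)$ thanks to that lowest-order bound, so no sign is needed there: Gronwall yields a harmless $t^{C\eps^2}$ growth which is absorbed by the small loss $\nu$ built into $e[u](T)$. If you want to complete your proof along your third option, this is precisely the asymptotic-ODE reduction you would have to implement, and the key realisation is that the sign is needed only at order zero, while smallness of the coefficient suffices at all higher orders.
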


\begin{rem}
For systems \eqref{eq} with cubic nonlinearity, another kind
of extension of the cubic null condition is studied in \cite{Ka}. There is 
no inclusion between the condition in \cite{Ka} and the condition (Ag) here.
\end{rem}
In \cite{KMaS}, systems of semilinear wave equations with
quadratic nonlinearity in three space dimensions are studied, and a sufficient
condition, which is weaker than the null condition, for small data global 
existence is obtained. Our condition (Ag) above can be viewed as a two space 
dimensional version of the condition in \cite{KMaS}. Theorem~\ref{Thm_SDGE} 
can be proved by a method similar to \cite{KMaS} 
(and also to \cite{KMuS}).
However, we need some modification to treat the quadratic nonlinearity 
by using a generalized energy estimate due to 
Alinhac~\cite{Ali01a} and \cite{Ali04} (see Lemma~\ref{GhostEnergy} below). 
Theorem~\ref{Thm_SDGE} will be proved in Section~\ref{PSDGE}.

Now we turn our attention to the decay of the energy. We define the
{\it energy norm} $\|u(t)\|_E$ by
$$
\|u(t)\|_E=\left(\frac{1}{2}\int_{\R^2} \sum_{a=0}^2 |\pa_a u(t,x)|^2 dx \right)^{1/2}.
$$

\begin{thm}  \label{Thm_EnergyDecay}
In addition to \eqref{null2} and {\rm (Ag)}, we assume that 
\begin{equation}
 Y \cdot {\mathcal A}(\omega) F^{\rm c, red}(\omega, Y)\ne  0, 
 \quad (\omega, Y)\in \Sph^1 \times (\R^N\backslash \{0\}).
\label{AgD}
\end{equation}
Let $u$ be the global solution to \eqref{eq}--\eqref{data} whose 
existence is guaranteed by Theorem~\ref{Thm_SDGE}. 
For any $\delta>0$,     
there exists a positive constant $C$ such that 
$$
\|u(t)\|_E\le \frac{C\eps}{(1+ \eps^2\log (t+2))^{\frac{1}{4}-\delta}}, \quad t\ge 0,
$$
provided that $\eps$ is sufficiently small.
\end{thm}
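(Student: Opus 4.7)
The plan is to combine the profile-reduction method for two-dimensional wave equations with the dissipative structure supplied by \eqref{AgD}, following the overall scheme of \cite{KMuS}. Writing $r = |x|$, $\omega = x/r \in \Sph^1$, $\sigma = r-t$, and $\tau = \log(t+2)$, the asymptotic ansatz $u(t,x) \approx \eps\, r^{-1/2}\, U(\tau,\sigma,\omega)$ is imposed in the wave zone, so that $\pa_a u \approx \eps r^{-1/2}\omega_a \pa_\sigma U$ with $\omega_0 = -1$. This identifies the leading behaviors $F^{\rm c}(\pa u) \approx \eps^3 r^{-3/2} F^{\rm c, red}(\omega, \pa_\sigma U)$ and, after a direct expansion of $\Box$ in the $(\tau,\sigma,\omega)$ coordinates, $\Box u \approx -2\eps r^{-1/2} t^{-1} \pa_\tau \pa_\sigma U$. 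Setting $V := \eps\, \pa_\sigma U$ and matching leading-order terms yields the reduced profile equation
\begin{equation*}
  \pa_\tau V(\tau,\sigma,\omega) = -\tfrac{1}{2} F^{\rm c, red}\bigl(\omega, V(\tau,\sigma,\omega)\bigr) + R(\tau,\sigma,\omega),
\end{equation*}
where $R$ absorbs the higher-order cubic corrections, the defect between $u$ and its profile, and the contribution of $F^{\rm q}(\pa u)$, which the quadratic null condition \eqref{null2} converts into a tangential derivative of a faster-decaying quantity.

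The next step extracts dissipation from \eqref{AgD}. Since $\mathcal{A}(\omega)$ is continuous and positive definite on the compact sphere $\Sph^1$ and $Y \mapsto Y \cdot \mathcal{A}(\omega) F^{\rm c, red}(\omega, Y)$ is a continuous, positive, degree-four homogeneous function on $\Sph^1 \times (\R^N \setminus \{0\})$, a compactness argument supplies a constant $c_0 > 0$ such that
\begin{equation*}
  Y \cdot \mathcal{A}(\omega) F^{\rm c, red}(\omega, Y) \ge c_0 |Y|^4, \qquad (\omega, Y) \in \Sph^1 \times \R^N.
\end{equation*}
Define the profile energy $\Psi(\tau) := \int_{\Sph^1}\!\int_{\R} V \cdot \mathcal{A}(\omega) V \, d\sigma\, d\omega$. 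Pairing the profile equation with $\mathcal{A}(\omega) V$ and integrating gives
\begin{equation*}
  \frac{d\Psi}{d\tau}(\tau) \le - c_0 \int_{\Sph^1}\!\int_{\R} |V|^4 \, d\sigma\, d\omega \;+\; \mathcal{R}(\tau),
\end{equation*}
where $\mathcal{R}(\tau)$ packages the $L^2$ pairing of $R$ with $\mathcal{A} V$. Finite speed of propagation and the compact support of $(f,g)$ keep $\supp V(\tau, \cdot, \cdot)$ bounded in $\sigma$, so Cauchy--Schwarz yields $\Psi(\tau)^2 \le C\!\int |V|^4\, d\sigma\, d\omega$, and the preceding inequality becomes the Riccati-type bound $\Psi'(\tau) \le -c_1 \Psi(\tau)^2 + \mathcal{R}(\tau)$.

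Starting from $\Psi(\tau_0) \lesssim \eps^2$ and integrating this Riccati inequality produces $\Psi(\tau) \lesssim \eps^2/(1 + \eps^2 \tau)^{1 - 2\delta}$; the loss of $2\delta$ in the exponent accommodates the mild $t^{C\eps^2}$ growth of higher-order generalized energies that is allowed by the proof of Theorem~\ref{Thm_SDGE} and that feeds into $\mathcal{R}$. Because $\|u(t)\|_E^2$ is controlled, up to faster-decaying contributions from outside the wave zone, by $\Psi(\log(t+2))$, taking the square root delivers the target bound $\|u(t)\|_E \le C\eps/(1 + \eps^2 \log(t+2))^{1/4 - \delta}$. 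The principal obstacle is the estimation of $\mathcal{R}$: the quadratic nonlinearity $F^{\rm q}$ has, a priori, the same size as $F^{\rm c}$ at the level of the energy, and only a careful use of \eqref{null2} through Alinhac's generalized (ghost-weight) energy inequality, which isolates the tangential $L_+ = \pa_t + \pa_r$ and angular derivatives (with their improved decay) from the $\pa_\sigma$-direction, makes $\mathcal{R}$ integrable in $\tau$ with only a $\tau^\delta$-loss.
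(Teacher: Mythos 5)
Your setup (profile along the light cone, the reduced equation in the slow time $\tau=\log(t+2)$, and the coercivity $Y\cdot\mathcal{A}(\omega)F^{\rm c,red}(\omega,Y)\ge c_0|Y|^4$ from (Ag) plus \eqref{AgD} and homogeneity) is sound and matches the paper's starting point. The genuine gap is the step that converts the quartic dissipation into a Riccati inequality for the integrated profile energy: the claim $\Psi(\tau)^2\le C\int|V|^4\,d\sigma\,d\omega$ via Cauchy--Schwarz needs the $\sigma$-support of $V(\tau,\cdot,\cdot)$ to have uniformly bounded measure, but finite propagation speed only gives $\sigma\le R$; in the other direction $\sigma=|x|-t$ ranges down to about $-t=-e^{\tau}$, and the a priori bounds give only the decay $|V|\lesssim\eps\jb{\sigma}^{\mu-1}$, not compact support. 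So the constant in your reverse-H\"older step degenerates, and $\Psi'\le-c_1\Psi^2+\mathcal{R}$ is not obtained. This is not a repairable technicality: away from the cone the damping coefficient is of size $|V|^2\sim\eps^2\jb{\sigma}^{2\mu-2}$, far too weak to drive the whole $L^2_\sigma$-mass down at the rate $\tau^{-1}$, and indeed the conclusion you draw, $\Psi(\tau)\lesssim\eps^2(1+\eps^2\tau)^{-1+2\delta}$, would give $\|u(t)\|_E\lesssim\eps(1+\eps^2\log t)^{-1/2+\delta}$ --- strictly stronger than the theorem and inconsistent with your own final exponent $1/4-\delta$ (the square root of your $\Psi$-bound is off by a factor of two in the exponent). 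The correct size of the integrated profile energy is only of order $(\eps^2\tau)^{-1/2}$, precisely because of the slowly decaying tail in $\sigma$.

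The paper avoids this by keeping the dissipation argument pointwise in $(\sigma,\omega)$: along each ray it sets $\Phi(t)=V(t)\cdot\mathcal{A}(\omega)V(t)$, derives $\Phi'(t)\le -C_0t^{-1}\Phi(t)^2+C'\eps^2\jb{\sigma}^{-3/2}t^{2\mu-3/2}$ from the profile equation and the remainder estimate, and applies the ODE lemma (Lemma~\ref{Lem_Mats}) to get $\Phi(t)\le C/\log t$ uniformly in $(\sigma,\omega)$, hence $|\pa u|\le Ct^{-1/2}(\log t)^{-1/2}$ near the cone. The $1/4-\delta$ rate then comes from combining this with the a priori bound $|\pa u|\lesssim\eps t^{-1/2}(R+|t-|x||)^{\mu-1}$ and splitting the energy integral at distance $\rho(t;\eps)=(\eps^2\log t)^{(1/2)+2\delta_0}$ from the light cone, balancing the shell contribution (controlled by the logarithmic pointwise decay) against the interior contribution (controlled by the $\jb{\sigma}^{\mu-1}$ decay). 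If you want to salvage your scheme, you would have to localize your profile energy in $\sigma$ (or weight it) and perform exactly this kind of region-splitting, at which point it reduces to the paper's argument; also note that your remainder term $\mathcal{R}$ should be handled through the analogue of Lemma~\ref{Lem_Remainder}, where the quadratic null structure is consumed, rather than inside the ghost-weight energy inequality, which in the paper is used only to close the generalized energy estimates behind Theorem~\ref{Thm_SDGE}.
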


If $N=1$ and $F^{\rm q}(\pa u)\equiv 0$, the assumption in 
Theorem~\ref{Thm_EnergyDecay} is nothing but \eqref{KMuSCond}. 
Although the expression is slightly different, we can easily check that
the energy decay rate in \cite{KMuS} coincides with the above rate. 
In \cite{KMuS}, the decay of the energy was obtained as a corollary to
a general theorem on the pointwise asymptotics of the global solutions
under the condition \eqref{Agemi01}. 
Explicit solvability of some related ordinary differential equations 
plays an essential role in the derivation of the asymptotics in \cite{KMuS}.
It seems quite difficult to apply this method to our system, because
we need to solve a related {\it system} of ODEs explicitly.
Therefore we take another approach to analyze solutions to the related system 
of ODEs without solving it explicitly. 
Theorem~\ref{Thm_EnergyDecay} will be proved in Section~\ref{PED}.

\begin{rem}
Under the assumption of Theorem~\ref{Thm_EnergyDecay}, we also have an
enhanced pointwise decay estimate for $\pa u$. See \eqref{DecayDamping02} below.
\end{rem}

\begin{rem}
In \cite{KMuS}, single but complex-valued wave equations with
cubic gauge-invariant semilinear terms are treated in fact, 
and the complex version of \eqref{Agemi01} was considered. 
However, 
the results on global existence and the energy decay
in \cite{KMuS} are easily recovered by our results here, by rewriting a single 
equation of a complex-valued unknown  as a 
two-component system of real unknowns through the standard identification 
of $\C$ with $\R^2$.
\end{rem}

\begin{rem}
For closely related results on nonlinear Schr\"odinger equations and 
nonlinear Klein-Gordon equations, see \cite{KLS} and \cite{KimS}, respectively.
\end{rem}

We conclude this section by giving some examples satisfying our conditions
\eqref{null2}, (Ag) and \eqref{AgD}.
Throughout this paper, 
we will use the following convention on implicit constants: 
The expression $f=\sum_{\lambda \in \Lambda}' g_{\lambda}$ means that there 
exists a family $\{C_{\lambda}\}_{\lambda \in \Lambda}$ of constants such 
that $f=\sum_{\lambda \in \Lambda} C_{\lambda} g_{\lambda}$.

\begin{ex} [Quadratic terms satisfying the null condition \eqref{null2}]
\label{ex01}
It is well known that the null condition \eqref{null2} is satisfied if and 
only if
\begin{equation}
\label{Null2Express}
F^{\rm q}_j(\pa u)=\Sum_{1\le k,l \le N} Q_0(u_k,u_l)
+ 
\Sum_{\substack{1\le k,l \le N \\ 0\le a,b\le 2}} Q_{ab}(u_k, u_l),
\end{equation}
where the {\it null forms} $Q_0$ and $Q_{ab}$ are given by 
\begin{align}
 Q_0(\phi, \psi)
 =& (\pa_t \phi)(\pa_t \psi)-(\nabla_x \phi) \cdot (\nabla_x \psi),
 \label{nullform_0}\\
 Q_{ab}(\phi, \psi)
 =& (\pa_a\phi)(\pa_b \psi)-(\pa_b \phi)(\pa_a \psi), 
 \qquad 0\le a, b\le 2 
 \label{nullform_ab}
\end{align}
(see \cite{Kla86} for instance). 
Similarly to \eqref{Null2Express}, it is also known that
 the cubic null condition \eqref{null3} is satisfied if and only if
$$
F^{\rm c}_j(\pa u)
=
 \Sum_{\substack{1\le k,l,m \le N\\ 0\le c\le 2 }} 
  (\pa_c u_m)Q_0(u_k,u_l)
 +
 \Sum_{\substack{1\le k,l,m \le N\\ 0\le a,b,c \le 2 }} 
  (\pa_c u_m)Q_{ab}(u_k, u_l)
$$
(see \cite{Kat93} for example). These nonlinear terms can be added freely without
affecting the conditions (Ag) or \eqref{AgD}.
\end{ex}

\begin{ex}[Cubic terms satisfying the condition {\rm (Ag)}]
We begin with simple examples satisfying (Ag).
When $N=1$, $F^{\rm c}(\pa u)=-(\pa_t u)^3$ is an example of the cubic terms 
satisfying (Ag) and \eqref{AgD}, as we have mentioned above.
Next we focus  on the case of two-component systems (i.e., $N=2$). 
Consider        
$$
F^{\rm c}(\pa u)=\left(
\begin{matrix}
F_1^{\rm c}(\pa u)\\
F_2^{\rm c}(\pa u)
\end{matrix}
\right)
=\left(
 \begin{matrix}
   -a (\pa_t u_1)^3+b (\pa_t u_1)(\pa_t u_2)^2\\
    -(\pa_t u_2)^3
  \end{matrix}
  \right)
$$
with $a\ge 0$ and $b\in \R$, whose reduced nonlinearity is
$$
F^{\rm c, red}(\omega, Y)=
\left(
\begin{matrix}
 aY_1^3-b Y_1Y_2^2\\
 Y_2^3
\end{matrix}
\right).
$$
\begin{itemize}
\item 
 If $a=0$ and $b\le 0$, we have
$$
Y\cdot F^{\rm c, red}(\omega, Y)
=-bY_1^2Y_2^2+Y_2^4\ge 0,\quad (\omega,Y) \in \Sph^1\times \R^2,
$$
whence the condition (Ag) is  satisfied with ${\mathcal A}(\omega)=I_2$.  
\item
If $a>0$, then the conditions (Ag) and \eqref{AgD} are satisfied for all 
$b\in \R$. Indeed, by choosing  
 $\dis{{\mathcal A}(\omega)=\left(
  \begin{matrix}
  1 & 0 \\
  0 & 1+(2a)^{-1}|b|^2
  \end{matrix}
  \right)}$, 
we have
\begin{align*}
Y\cdot {\mathcal A}(\omega)
F^{\rm c, red}(\omega, Y)
= &
\frac{a}{2}Y_1^4+Y_2^4 
+\left(\sqrt{\frac{a}{2}}Y_1^2-\sqrt{\frac{1}{2a}}|b|Y_2^2\right)^2\\
&+(|b|-b)Y_1^2Y_2^2.
\end{align*}
\end{itemize}
\end{ex}

\begin{ex}[A cubic term satisfying {\rm (Ag)} and \eqref{AgD} with a 
non-diagonal weight]
We give a bit less trivial example. Let $N=2$ and consider
$$
F^{\rm c}(\pa u)
    =\left(
    \begin{matrix}
    F_1^{\rm c}(\pa u)\\
    F_2^{\rm c}(\pa u)
    \end{matrix}
    \right)
$$
with 
\begin{align*}
 F_1^{\rm c}(\pa u)
 =&
   -(\pa_t u_1)^3-(\pa_t u_2)^3
   -\frac{1}{2}
    \Bigl( (\pa_1 u_1)^2 -(\pa_1 u_2)^2 \Bigr)
    \bigl( \pa_2 u_1-\pa_2 u_2 \bigr),\\
 F_2^{\rm c}(\pa u)
 =&
 (\pa_t u_1)^3-3(\pa_t u_1)^2(\pa_t u_2)\\
 &\hspace{13mm}  +
    \frac{1}{2}
   \Bigl((\pa_1 u_1)(\pa_2u_1)-(\pa_1 u_2)(\pa_2u_2)\Bigr)
   \bigl( \pa_1 u_1-\pa_1 u_2 \bigr),
\end{align*}
whose reduced nonlinearity is
$$
F^{\rm c, red}(\omega, Y)=
\left(
\begin{matrix}
 Y_1^3+Y_2^3-\omega_1^2\omega_2(Y_1^2-Y_2^2)(Y_1-Y_2)/2\\
 -Y_1^3+3Y_1^2Y_2+\omega_1^2\omega_2(Y_1^2-Y_2^2)(Y_1-Y_2)/2
\end{matrix}
\right).
$$
By choosing 
$$
{\mathcal A}(\omega)=
            4 \left(
              \begin{matrix}
                 2-\omega_1^2\omega_2 & 1-\omega_1^2\omega_2\\
                 1-\omega_1^2\omega_2 & 2-\omega_1^2\omega_2
              \end{matrix}
            \right)
        =
         2
         \left(\begin{matrix} 1 &  1 \\
                              1 & -1 \end{matrix}
         \right)
         \left(\begin{matrix}
                 3-2\omega_1^2\omega_2 & 0 \\
                 0 & 1
               \end{matrix}
         \right)
         \left(\begin{matrix} 1 & 1 \\
                              1 & -1 \end{matrix}
         \right),
$$
we get
\begin{align*}
& Y\cdot{\mathcal A}(\omega)F^{\rm c,red}(\omega,Y)\\
& \quad =
         \left( \begin{matrix} Y_1+Y_2 & Y_1-Y_2 \end{matrix} \right)
         \left(\begin{matrix}
                 3-2\omega_1^2\omega_2 & 0 \\
                 0 & 1
               \end{matrix}
         \right)\\
&\hspace{23mm}
         \times \left( \begin{matrix}
                (Y_1+Y_2)^3-(Y_1-Y_2)^3\\
                (Y_1-Y_2)^3+(3-2\omega_1^2\omega_2)(Y_1+Y_2)(Y_1-Y_2)^2
                \end{matrix}
         \right)
        \\
& \quad =
         (3-2\omega_1^2\omega_2)(Y_1+Y_2)^4+(Y_1-Y_2)^4.   
\end{align*}
Observing that $3-2\omega_1^2\omega_2\ge 1$ for $\omega\in \Sph^1$,
we see that (Ag) and \eqref{AgD} are satisfied for this cubic nonlinearity.
\end{ex}


\section{Preliminaries }\label{Prlmn}

\subsection{Commuting vector fields }

In this subsection, we recall basic properties of the vector fields 
associated with the wave equation. 
In what follows, we denote several positive constants by $C$ which may 
vary from one line to another. 
For  $y \in \R^d$ with a positive integer $d$, 
the notation $\jb{y}=(1+|y|^2)^{1/2}$ will be often used. 

We introduce
\begin{align*}
S:= t\pa_t+\sum_{j=1}^2 x_j \pa_j, \ %
L_1:= t\pa_1+x_1\pa_t, \ %
L_2:=t\pa_2+x_2\pa_t,\  %
\Omega:= x_1\pa_2-x_2\pa_1,
\end{align*}
and we set
$$
 \Gamma
  =(\Gamma_0,\Gamma_1, \ldots, \Gamma_6)
 :=(S, L_1, L_2, \Omega, \pa_0,\pa_1,\pa_2).
$$
With a multi-index $\alpha=(\alpha_0, \alpha_1, \ldots, \alpha_6)\in \ZP^7$, 
we write 
$\Gamma^\alpha
 =
 \Gamma_0^{\alpha_0}\Gamma_1^{\alpha_1}\cdots \Gamma_6^{\alpha_6}$, 
where $\ZP$ denotes the set of non-negative integers. 
For a smooth function $\psi=\psi(t,x)$ and a non-negative integer $s$, 
we define 
$$
|\psi(t,x)|_s=\sum_{|\alpha|\le s} |\Gamma^\alpha \psi(t,x)|,
\quad 
\|\psi(t)\|_s=\sum_{|\alpha|\le s} \|\Gamma^\alpha \psi(t,\cdot)\|_{L^2(\R^2)}.
$$
It is easy to see that $[\dal, L_j]=[\dal, \Omega]=[\dal, \pa_a]=0$ 
for $j=1,2$ and $a=0,1,2$, where $[A,B]=AB-BA$ for 
operators $A$ and $B$. 
We also have $[\dal, S]=2\dal$. 
Therefore for any $\alpha=(\alpha_0,\alpha_1,\ldots,\alpha_6)\in \ZP^7$ and 
a smooth function $\psi$, we have
\begin{equation}
\label{Comm01}
\dal \Gamma^\alpha \psi
=
\widetilde\Gamma^\alpha \dal \psi,  
\end{equation}
where 
 $\widetilde\Gamma^\alpha
  =(\Gamma_0+2)^{\alpha_0}\Gamma_1^{\alpha_1}\cdots\Gamma_6^{\alpha_6} $. 
We can check that 
$$
 [\Gamma_a, \Gamma_b]=\Sum_{0\le c\le 6} \Gamma_c, 
 \quad 
 [\Gamma_a, \pa_b]=\Sum_{0\le c\le 2} \pa_c. 
$$
Hence for any $\alpha, \beta\in \ZP^7$, 
and any non-negative integer $s$, 
there exist positive constants $C_{\alpha,\beta}$ and $C_s$ such that
\begin{align}
& |\Gamma^\alpha\Gamma^\beta\psi(t,x)|
  \le 
  C_{\alpha,\beta} |\psi(t,x)|_{|\alpha|+|\beta|},
\nonumber\\
 \label{Comm03}
& C_s^{-1} |\pa \psi(t,x)|_s
  \le 
  \sum_{0\le a\le 2}\sum_{|\gamma|\le s} |\pa_a\Gamma^\gamma\psi(t,x)|
  \le 
  C_s|\pa \psi(t,x)|_s
\end{align}
for any smooth function $\psi$.

For $x\in \R^2$, we use the polar coordinates $r=|x|$ and $\omega=|x|^{-1}x$, 
so that $x=r\omega$ and $\pa_r=\sum_{j=1}^2 (x_j/|x|)\pa_j$.
We put $\pa_\pm:=\pa_t\pm \pa_r$ and 
$D_\pm:=2^{-1} 
(\pa_r\pm \pa_t)$. 
We also introduce  
$$
 \hat\omega:=(-1,\omega_1,\omega_2).
$$ 
Remark that  
\begin{align}
 D_+=\frac{1}{2(t+r)} \Bigl( S+ \omega_1 L_1+\omega_2 L_2 \Bigr),
\label{D_plus}
\end{align}
which implies a gain of $(t+r)^{-1}$ in $D_+$ with the aid of $\Gamma$'s. 
Writing $\pa_j$ in the polar coordinates, we get
\begin{align}
|(\pa_j-\omega_j \pa_r) \psi(t,x)|\le & \frac{1}{r}|\Omega \psi(t,x)|
=\frac{1}{t+r}|(\Omega+\omega_1L_2-\omega_2L_1)\psi(t,x)|
\nonumber\\
\le & C\frac{|\Gamma \psi(t,x)|}{t+r},\quad j=1,2
\label{D_plus_a}
\end{align}
for a smooth function $\psi$.
Since $\pa_t=-D_-+D_+$ and $\pa_r=D_-+D_+$, it follows from
\eqref{D_plus} and \eqref{D_plus_a} that
\begin{equation}
|\pa \psi(t,x)-\hat{\omega} D_-\psi(t,x)|\le C \jb{t+r}^{-1}|\Gamma \psi(t,x)|.
\label{D_plus_+}
\end{equation}

Now we summarize a couple of useful lemmas which will be needed in the 
subsequent sections. 

\begin{lem}\label{Lem_Aux01}
Let 
$\Lambda_T:=\{(t,x)\in [0, T)\times \R^2; |x| \ge t/2\ge 1\}$. 
There is a positive constant $C$ such that
$$ 
 \Bigl|
  |x|^{1/2}\pa \psi(t,x)
  -
  \hat\omega D_-\bigl( |x|^{1/2}\psi(t,x) \bigr)
 \Bigr|
 \le 
 C \jb{t + |x| }^{-1/2}|\psi(t,x)|_1
$$
for $(t,x)\in \Lambda_T$ and $\psi\in C^1([0, T)\times \R^2)$.
\end{lem}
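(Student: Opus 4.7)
The plan is to reduce the inequality to the already-established pointwise bound \eqref{D_plus_+} for $\pa\psi-\hat\omega D_-\psi$, modulo a commutator correction arising from moving the weight $|x|^{1/2}$ past the first-order operator $D_-$.

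First I would compute this commutator. Since $\pa_t r^{1/2}=0$ and $\pa_r r^{1/2}=\tfrac12 r^{-1/2}$ with $r=|x|$, the definition $D_-=\tfrac12(\pa_r-\pa_t)$ gives
\[
D_-\bigl(|x|^{1/2}\psi\bigr)=|x|^{1/2}D_-\psi+\tfrac14|x|^{-1/2}\psi.
\]
Rearranging, this produces the identity
\[
|x|^{1/2}\pa\psi-\hat\omega D_-\bigl(|x|^{1/2}\psi\bigr)=|x|^{1/2}\bigl(\pa\psi-\hat\omega D_-\psi\bigr)-\tfrac14\hat\omega|x|^{-1/2}\psi,
\]
which splits the quantity to be estimated into a main term to which \eqref{D_plus_+} applies directly and a harmless lower-order remainder.

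Next I would estimate each summand on the region $\Lambda_T$, where the assumptions $|x|\ge t/2\ge 1$ force $t+|x|$ to be comparable to $|x|$; in particular $|x|^{1/2}\le C\jb{t+|x|}^{1/2}$ and $|x|^{-1/2}\le C\jb{t+|x|}^{-1/2}$. For the main term, \eqref{D_plus_+} yields
\[
|x|^{1/2}\bigl|\pa\psi-\hat\omega D_-\psi\bigr|\le C|x|^{1/2}\jb{t+|x|}^{-1}|\Gamma\psi(t,x)|\le C\jb{t+|x|}^{-1/2}|\psi(t,x)|_1.
\]
For the remainder, the weight comparison together with $|\psi|\le|\psi(t,x)|_1$ gives $\tfrac14|x|^{-1/2}|\psi|\le C\jb{t+|x|}^{-1/2}|\psi(t,x)|_1$. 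Summing the two estimates concludes the proof.

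I do not expect a genuine obstacle here; the argument is essentially algebraic. The only step requiring attention is the use of the restriction to $\Lambda_T$, which is precisely what permits trading $|x|^{\pm 1/2}$ for $\jb{t+|x|}^{\pm 1/2}$ in both the main and remainder terms; without the lower bound $|x|\ge t/2\ge 1$ the remainder could not be absorbed into the claimed $\jb{t+|x|}^{-1/2}$ decay.
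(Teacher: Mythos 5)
Your proof is correct and follows exactly the route the paper indicates: the paper states that Lemma~\ref{Lem_Aux01} is a consequence of \eqref{D_plus_+} (referring to \cite{KMuS} for details), and your commutator identity $D_-\bigl(|x|^{1/2}\psi\bigr)=|x|^{1/2}D_-\psi+\tfrac14|x|^{-1/2}\psi$ together with the equivalence of $|x|$ and $\jb{t+|x|}$ on $\Lambda_T$ is precisely how that deduction goes.
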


\begin{lem}\label{Lem_Hor}
{\rm{(i)}}\ Let $\psi$ be a smooth solution to
$$
\dal \psi(t,x)=G(t,x), \quad (t,x)\in (0, T)\times \R^2
$$
with initial data $\psi=\pa_t \psi=0$ at $t=0$. 
Then there exists a universal positive constant $C$, which is independent of $T$, 
such that
\begin{equation}
 \jb{t+|x|}^{1/2}|\psi(t,x)|
 \le 
 C\sum_{|\alpha|\le 1}\int_0^t
 \frac{\|\Gamma^{\alpha} G(\tau, \cdot)\|_{L^1(\R^2)}}{\jb{\tau}^{1/2}}d\tau 
\label{HorEst01}
\end{equation}
for $(t,x)\in [0,T)\times \R^2$.
\smallskip\\
{\rm{(ii)}}\ 
Let $\psi^0$ be a smooth solution to $\dal \psi^0(t,x)=0$ for 
$(t,x)\in (0,T)\times \R^2$ satisfying 
$\psi^0(0,x)=(\pa_t \psi^0)(0,x)=0$ for $|x|\ge R$ with some $R>0$. 
Then there is a positive constant $C_R$,  
depending only on $R$, such that 
\begin{equation}
 \jb{t+|x|}^{1/2}|\psi^0(t,x)|\le C_R \|\psi^0(0)\|_2,
 \quad (t,x)\in [0,T)\times \R^2.
\label{HorEst02}
\end{equation}
\end{lem}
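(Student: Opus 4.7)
The plan is to prove (i) and (ii) separately using standard tools for the two-dimensional wave equation.

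For part (i), I would begin from the Duhamel representation with the retarded fundamental solution in two space dimensions,
\[
\psi(t,x)=\frac{1}{2\pi}\int_{0}^{t}\int_{|y|\le t-\tau}\frac{G(\tau,x-y)}{\sqrt{(t-\tau)^2-|y|^2}}\,dy\,d\tau,
\]
and introduce polar coordinates $y=\rho\,\sigma$ with $\sigma\in\Sph^1$, followed by the substitution $\rho=(t-\tau)\sin\phi$ to turn the kernel into a bounded object. The aim is to bound the resulting expression by $C\jb{t+|x|}^{-1/2}$ times the right-hand side of \eqref{HorEst01}. I would split the $\tau$-integral into the regimes $0\le\tau\le t/2$, where $t-\tau\simeq t+|x|$ and the kernel supplies the required $\jb{t+|x|}^{-1/2}$ factor directly, and $t/2\le\tau\le t$, where I would perform an angular integration by parts in $\sigma$ using $\Omega$ to convert the non-integrable part of the kernel into one that loses one derivative but gains the factor $(\tau)^{-1/2}$ needed to close the estimate. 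This is exactly the point at which a single vector field is spent on $G$, producing the sum $\sum_{|\alpha|\le 1}\|\Gamma^\alpha G(\tau,\cdot)\|_{L^1}$ in the final bound.

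For part (ii), because the Cauchy data at $t=0$ are supported in $B_R$, finite propagation speed confines $\psi^0$ to $\{|x|\le t+R\}$. Applying \eqref{Comm01} with $\dal\psi^0=0$, each $\Gamma^\alpha\psi^0$ solves a free wave equation, and the Cauchy data of $\Gamma^\alpha\psi^0$ at $t=0$ are still supported in $B_R$ (since $t$-factors in $\Gamma$ vanish at $t=0$), with $L^2$-norm dominated by $C_R\|\psi^0(0)\|_2$. Energy conservation then gives $\sum_{|\alpha|\le 2}\|\Gamma^\alpha\psi^0(t,\cdot)\|_{L^2}\le C_R\|\psi^0(0)\|_2$ for all $t$. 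Invoking the two-dimensional Klainerman--Sobolev inequality
\[
\jb{t+|x|}^{1/2}\jb{t-|x|}^{1/2}|\psi^0(t,x)|\le C\sum_{|\alpha|\le 2}\|\Gamma^\alpha\psi^0(t,\cdot)\|_{L^2},
\]
and using $\jb{t-|x|}^{-1/2}\le 1$, the desired \eqref{HorEst02} follows. An alternative route is a reduction to (i): multiply $\psi^0$ by a time cutoff $\chi(t)$ equal to $1$ on $[0,1]$ and $0$ on $[2,\infty)$, so that $v=\chi\psi^0$ satisfies $\dal v=[\dal,\chi]\psi^0$, a source supported in a compact spacetime set with $L^1$-norm controlled by $C_R\|\psi^0(0)\|_2$; then (i) applied to $v$ on $t\ge 2$ plus a direct argument on $[0,2]$ gives the estimate.

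The routine bookkeeping aside, the genuine obstacle is the estimate (i) in the regime where $|x|$ is comparable to $t$ and $\tau$ is close to $t$, since the Abel-type kernel in the representation formula has a non-integrable singularity on the forward light cone. Handling this regime requires the one integration by parts in $\Omega$ described above, which trades the singularity against one power of $\Gamma$ applied to $G$; this trade is the reason that exactly one $\Gamma$ must appear on the right-hand side of \eqref{HorEst01}. Once this regime is controlled, the weight $\jb{t+|x|}^{1/2}$ emerges uniformly and the remaining contributions are comparatively straightforward.
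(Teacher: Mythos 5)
For part (i) you have not given a proof; you have restated the hard step. The paper itself does not reprove \eqref{HorEst01} — it is H\"ormander's $L^1$--$L^\infty$ estimate, quoted from \cite{Hor} — and your representation-formula sketch glosses over exactly the point where that proof is delicate. First, the kernel $\bigl((t-\tau)^2-|y|^2\bigr)^{-1/2}$ is locally integrable, so the obstacle is not a ``non-integrable singularity'' but the production of the factor $\jb{t+|x|}^{-1/2}$ against only an $L^1$ norm of $G$: pointwise the kernel is of size $(t-\tau)^{-1/2}\,(t-\tau-|x-y|)^{-1/2}$, and the second factor is unbounded near the circle $|x-y|=t-\tau$ \emph{also when} $\tau\le t/2$, so your claim that in that regime ``the kernel supplies the required factor directly'' is false — one cannot bound $\int |G(\tau,y)|\,(t-\tau-|x-y|)^{-1/2}\,dy$ by $\|G(\tau)\|_{L^1}$ alone, and a vector field must be spent there as well. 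Second, the proposed mechanism for the near-cone regime — an angular integration by parts in $\Omega$ that ``gains $\tau^{-1/2}$'' — is unsubstantiated: the singular set is a circle centered at $x$, not at the origin, and you give no computation showing that the rotation field tames it or where the quantitative gain comes from. H\"ormander's proof proceeds quite differently (a dyadic spacetime decomposition of the source, using all of $S$, $L_j$, $\Omega$, $\pa$), and it is genuinely nontrivial; as written, your part (i) asserts the conclusion of that argument rather than proving it. Simply citing \cite{Hor}, as the paper does, would be acceptable; a self-contained proof would require substantially more.

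For part (ii), your alternative route is exactly the paper's: cut off in time, note that $\dal(\chi\psi^0)=[\dal,\chi]\psi^0$ is supported in a compact slab where its $\Gamma^\alpha$-$L^1$ norms are controlled by $\|\psi^0(0)\|_2$ via finite propagation speed, the energy identity and Cauchy--Schwarz, then apply (i) for $t\ge 2$ and use energy plus Sobolev embedding for $0\le t\le 2$ (where $\jb{t+|x|}\le C_R$ on the support). That argument is correct once (i) is granted. Your primary route, however, contains a genuine error: energy conservation controls $\|\pa\Gamma^\alpha\psi^0(t)\|_{L^2}$, not $\|\Gamma^\alpha\psi^0(t)\|_{L^2}$, and the undifferentiated norms are in general \emph{not} bounded uniformly in $t$ in two space dimensions — for a free wave with $\int_{\R^2}\pa_t\psi^0(0,x)\,dx\neq 0$ one has $\|\psi^0(t)\|_{L^2}\sim(\log t)^{1/2}$ — so the Klainerman--Sobolev inequality cannot be fed with a time-uniform bound in the way you claim; repairing that route would require a conformal-energy or weighted estimate. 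Keep the cutoff reduction and discard the first argument.
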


\begin{lem}\label{Lem_Lind}
For any non-negative integer $s$, there exists a positive constant $C_s$ 
such that
$$
 |\pa \psi(t,x)|_s\le C_s \jb{t - |x| }^{-1}|\psi(t,x)|_{s+1},
 \quad (t,x)\in [0, T)\times \R^2
$$
for any $\psi\in C^{s+1}([0, T)\times \R^2)$.
\end{lem}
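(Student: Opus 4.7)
The plan is to reduce to the case $s=0$ by iterating the commutator estimates \eqref{Comm03}, and to prove the base case via the classical Klainerman identities expressing $(t^{2}-|x|^{2})\pa_{a}$ as a linear combination of $S,L_{1},L_{2},\Omega$ with coefficients of size $O(t+|x|)$.

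First I would verify by direct computation the identities
\begin{align*}
 (t^{2}-|x|^{2})\pa_{t} &= tS-x_{1}L_{1}-x_{2}L_{2},\\
 (t^{2}-|x|^{2})\pa_{1} &= tL_{1}-x_{1}S+x_{2}\Omega,\\
 (t^{2}-|x|^{2})\pa_{2} &= tL_{2}-x_{2}S-x_{1}\Omega.
\end{align*}
Each coefficient on the right is bounded by $t+|x|$, so these yield
$$
 |t-|x||\,(t+|x|)\,|\pa_{a}\psi(t,x)|\le C(t+|x|)\,|\psi(t,x)|_{1},
 \qquad a=0,1,2.
$$
For $(t,x)$ with $t+|x|>0$ this gives $|t-|x||\,|\pa_{a}\psi|\le C|\psi|_{1}$, and combining this with the trivial bound $|\pa_{a}\psi|\le |\psi|_{1}$ (valid at the single point $(0,0)$ as well) produces the base case
$$
 \jb{t-|x|}\,|\pa\psi(t,x)|\le C\,|\psi(t,x)|_{1}.
$$

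For $s\ge 1$, I would use the right-hand inequality in \eqref{Comm03} to write
$$
 |\pa\psi(t,x)|_{s}\le C_{s}\sum_{|\gamma|\le s}\sum_{a=0}^{2}|\pa_{a}\Gamma^{\gamma}\psi(t,x)|,
$$
apply the base case to each $\Gamma^{\gamma}\psi$ to get
$$
 |\pa_{a}\Gamma^{\gamma}\psi(t,x)|\le C\jb{t-|x|}^{-1}|\Gamma^{\gamma}\psi(t,x)|_{1},
$$
and then absorb the extra $\Gamma$'s back into $|\psi|_{s+1}$ via the first inequality in \eqref{Comm03}.

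The only mild subtlety is the region where $t+|x|$ is very small: there one cannot simply divide by $t+|x|$, but the trivial bound $|\pa_{a}\psi|\le |\psi|_{1}$ together with $\jb{t-|x|}\le C$ in that region makes the inequality automatic. No other step presents a real obstacle; the proof is purely algebraic once the three Klainerman identities are in hand.
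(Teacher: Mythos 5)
Your proof is correct: the three identities check out, the base case with the trivial bound $|\pa_a\psi|\le|\psi|_1$ and $\jb{t-|x|}\le 1+|t-|x||$ gives $\jb{t-|x|}\,|\pa\psi|\le C|\psi|_1$, and the passage to general $s$ via \eqref{Comm03} and the commutator bound is exactly as needed. The paper itself gives no proof of Lemma~\ref{Lem_Lind} (it only cites Lindblad and \cite{KMuS}), and your Klainerman-identity argument is precisely the standard one underlying those references, so there is nothing to add.
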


Lemma~\ref{Lem_Aux01} is a consequence of \eqref{D_plus_+}.
See \cite{KMuS} for its proof. 
The estimate \eqref{HorEst01} in Lemma~\ref{Lem_Hor} 
is often called H\"ormander's $L^1$--$L^\infty$ 
estimate, which is proved in \cite{Hor}. The estimate \eqref{HorEst02} for $t>1$ is
an easy consequence of \eqref{HorEst01} and the energy identity via the cut-off argument, while
 \eqref{HorEst02} for $0\le t\le 1$ follows from the energy identity and the Sobolev embedding theorem (see \cite{Lin08} for example).
Lemma~\ref{Lem_Lind} is due to Lindblad~\cite{Lind}; only the case of 
three space dimensions is treated in \cite{Lind}, but 
the two-dimensional case can be similarly proved 
(see \cite{KMuS} for instance).

\subsection{The null condition and the generalized energy estimate}

First we recall the estimates for quadratic  
terms satisfying the null condition \eqref{null2}.
Using \eqref{D_plus_+}, we have 
\begin{equation}
\label{NullEst01}
|Q_0(\phi,\psi)|+\sum_{a,b=0}^2 |Q_{ab}(\phi,\psi)|\le
C\jb{t+r}^{-1}(|\pa \phi|\,|\Gamma\psi|+|\Gamma\phi|\,|\pa \psi|),
\end{equation}
where $Q_0$ and $Q_{ab}$ are the null forms defined by \eqref{nullform_0}
and \eqref{nullform_ab}.
Since $\Gamma^\alpha \bigl(Q_0(\phi,\psi)\bigr)$ or 
$\Gamma^\alpha \bigl(Q_{ab}(\phi,\psi)\bigr)$ for any $\alpha\in \Z_+^7$ 
can be written as a linear combination of 
$Q_0(\Gamma^\beta \phi, \Gamma^\gamma \psi)$ 
and $Q_{cd}(\Gamma^\beta \phi, \Gamma^\gamma \psi)$ 
with $|\beta|+|\gamma|\le |\alpha|$ and $0\le c, d\le 2$, 
\eqref{Null2Express} and \eqref{NullEst01} yield the following lemma 
(see~\cite{Kla86} for the details):

\begin{lem}\label{NullEst02}
Suppose that \eqref{null2} is satisfied.
For 
$s\in \Z_+$, we have
$$
|F^{\rm q}(\pa u)|_s
 \le C \jb{ t + |x| }^{-1}
 \left(
   |\pa u||\Gamma u|_{s}
   +
   |\pa u|_{[s/2]}|\Gamma u|_{s-1}
   +
   |\Gamma u|_{[s/2]}|\pa u|_s
 \right)
$$
with a positive constant $C$. 
Here, $|\cdot|_{-1}$ is regarded as $0$.
\end{lem}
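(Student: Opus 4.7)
The plan is to carry out the bookkeeping sketched in the paragraph preceding the statement. First I would use the representation \eqref{Null2Express} to write each $F_j^{\rm q}(\pa u)$ as a linear combination of the null forms $Q_0(u_k,u_l)$ and $Q_{ab}(u_k,u_l)$ with $1\le k,l\le N$ and $0\le a,b\le 2$. Then, applying $\Gamma^\alpha$ with $|\alpha|\le s$ and invoking the null-form commutation identity already noted above the lemma (that $\Gamma^\alpha Q_0$ and $\Gamma^\alpha Q_{ab}$ are linear combinations of $Q_0(\Gamma^\beta u_k,\Gamma^\gamma u_l)$ and $Q_{cd}(\Gamma^\beta u_k,\Gamma^\gamma u_l)$ with $|\beta|+|\gamma|\le|\alpha|$), I reduce the task to bounding a finite sum of such null forms.

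Next, I would apply the pointwise null-form estimate \eqref{NullEst01} to each term, producing
\[
 |F^{\rm q}(\pa u)|_s
 \;\le\;
 C\jb{t+|x|}^{-1}\sum_{|\beta|+|\gamma|\le s}
 \bigl(|\pa\Gamma^\beta u|\,|\Gamma\Gamma^\gamma u| + |\Gamma\Gamma^\beta u|\,|\pa\Gamma^\gamma u|\bigr).
\]
The commutator inequality \eqref{Comm03} then lets me replace $|\pa\Gamma^\beta u|$ by $|\pa u|_{|\beta|}$ and $|\Gamma\Gamma^\gamma u|$ by $|\Gamma u|_{|\gamma|}$ up to constants, reducing matters to controlling a sum of products $|\pa u|_p\,|\Gamma u|_q$ with $p+q\le s$.

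Finally, I would split this sum into the three pieces on the right-hand side of the lemma via a trichotomy on $p$. If $p=0$, the product is absorbed by $|\pa u|\,|\Gamma u|_s$. If $1\le p\le[s/2]$, then $q\le s-1$, so the product is absorbed by $|\pa u|_{[s/2]}\,|\Gamma u|_{s-1}$. If $p>[s/2]$, a quick case-by-case check on the parity of $s$ gives $q\le[s/2]$, and the product is absorbed by $|\Gamma u|_{[s/2]}\,|\pa u|_s$. The degenerate case $s=0$ is handled by the stated convention $|\cdot|_{-1}=0$. There is essentially no obstacle beyond this multi-index bookkeeping: the lemma is a standard tame Moser-type estimate whose entire content is built into \eqref{Null2Express}, the null-form commutation property, and the pointwise bound \eqref{NullEst01}.
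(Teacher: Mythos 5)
Your proposal is correct and follows exactly the route the paper indicates: the representation \eqref{Null2Express}, the fact that $\Gamma^\alpha$ of a null form is again a combination of null forms of $\Gamma^\beta u_k$, $\Gamma^\gamma u_l$ with $|\beta|+|\gamma|\le|\alpha|$, the pointwise bound \eqref{NullEst01}, and the final index trichotomy (which the paper leaves to \cite{Kla86}) all check out. The only cosmetic point is that replacing $|\Gamma\Gamma^\gamma u|$ by $|\Gamma u|_{|\gamma|}$ rests on the commutation relations $[\Gamma_a,\Gamma_b]=\sum'\Gamma_c$ (the unnumbered inequality preceding \eqref{Comm03}) rather than on \eqref{Comm03} itself, which concerns $\pa$.
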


We must make use of this enhanced decay to treat $F^{\rm q}$ in the 
energy estimate. 
However, if we use Lemma~\ref{NullEst02} 
in the standard energy inequality, 
we need some estimate for $|\Gamma u|_s$ which does not follow form the 
standard energy inequality. 
To overcome this difficulty, we use a generalized energy inequality  
due to Alinhac~\cite{Ali01a} and \cite{Ali04}. We introduce 
$$
 Z=(Z_1,Z_2)
 =
 \left(\frac{x_1}{|x|}\pa_t+\pa_1, \frac{x_2}{|x|}\pa_t+\pa_2\right).
$$

\begin{lem}\label{GhostEnergy}
Let $T\in (0,\infty]$. 
Suppose that 
$\psi=\psi(t,x)$ is a smooth function satisfying
$$
\dal\psi(t,x)=G(t,x),\quad (t,x)\in (0,T)\times \R^2.
$$
For any $\rho>1$, there is a positive constant $C$, depending only on $\rho$, 
such that
\begin{align*}
& \|\pa \psi(t)\|_{L^2(\R^2)}^2
  +
  \int_0^t 
  \left( 
     \int_{\R^2}  \frac{|Z\psi(\tau, y)|^2 }{\jb{\tau-|y|}^\rho} dy 
  \right)
  d\tau\\
& \quad \le 
 C   
  \|\pa\psi(0)\|_{L^2(\R^2)}^2
  +
 C
 \int_0^t
   \left( \int_{\R^2} \left|G(\tau,y)(\pa_t\psi)(\tau,y)\right|dy \right)
 d\tau
\end{align*}
for $t\in [0,T)$.
\end{lem}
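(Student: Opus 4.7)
The plan is to follow Alinhac's ghost-weight method. The idea is to multiply $\dal\psi=G$ by $w\pa_t\psi$, where $w=e^{q(|x|-t)}$ is a bounded multiplier whose derivative supplies exactly the spacetime weight $\jb{\tau-|y|}^{-\rho}$ acting on $|Z\psi|^2$. Concretely, set $q(s):=\int_0^s\jb{\sigma}^{-\rho}\,d\sigma$; since $\rho>1$, $q$ is bounded on $\R$, so $e^{-C_\rho}\le w\le e^{C_\rho}$, while $q'(s)=\jb{s}^{-\rho}>0$ will supply the desired positivity.

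Next I would derive the pointwise identity obtained by multiplying the equation by $w\pa_t\psi$ and rewriting it in divergence form, treating $w$ as a variable coefficient. Using $\pa_t w=-q'(|x|-t)w$ and $\nabla w=\omega\,q'(|x|-t)w$ with $\omega=x/|x|$, the identity takes the form
\begin{align*}
\pa_t\Bigl(\tfrac{1}{2}w|\pa\psi|^2\Bigr)-\nabla\cdot\bigl(w(\pa_t\psi)\nabla\psi\bigr)+R=w(\pa_t\psi)G,
\end{align*}
where a short computation gives the remainder
\begin{align*}
R=\tfrac{1}{2}q'(|x|-t)w\bigl[(\pa_t\psi+\pa_r\psi)^2+|x|^{-2}(\Omega\psi)^2\bigr].
\end{align*}
The crux is the algebraic identity
\begin{align*}
|Z\psi|^2=(\pa_t\psi+\pa_r\psi)^2+|x|^{-2}(\Omega\psi)^2,
\end{align*}
which follows by expanding $Z_j=\omega_j\pa_t+\pa_j$ and invoking the two-dimensional polar decomposition $|\nabla\psi|^2=(\pa_r\psi)^2+|x|^{-2}(\Omega\psi)^2$. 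Hence $R=\tfrac{1}{2}q'(|x|-t)w|Z\psi|^2\ge 0$.

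Finally, I would integrate the identity over $[0,t]\times\R^2$; the spatial divergence term vanishes either by assuming sufficient decay of $\psi$ at infinity (justified in the intended applications via finite propagation speed from compactly supported data, or otherwise by a standard truncation/approximation since the estimate is local in time). Because $w$ is pinched between two positive constants depending only on $\rho$, and $q'(|x|-\tau)=\jb{\tau-|x|}^{-\rho}$, absorbing $w$ yields the claimed estimate.

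The main obstacle is recognizing that the two sign-opposed contributions from differentiating the weight, $\pa_t w$ and $\nabla w$, combine with the standard energy identity to produce exactly the non-negative quadratic form $\tfrac{1}{2}q'w|Z\psi|^2$ localized on the tangential null directions. Once this cancellation is isolated, the rest is a routine integration in time.
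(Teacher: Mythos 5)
Your proposal is correct and is essentially the paper's own argument: Alinhac's ghost-weight energy method with the bounded multiplier $e^{q(|x|-t)}$, $q'(s)=\jb{s}^{-\rho}$, where differentiating the weight produces exactly the nonnegative term $\tfrac12 q' w\,|Z\psi|^2$ (the paper writes the same weight as $e^{\eta}$ with $\eta(t,x)=\int_{-\infty}^{|x|-t}\jb{z}^{-\rho}dz$ and computes $\frac{d}{dt}\int e^{\eta}|\pa\psi|^2dx$ rather than a pointwise divergence identity, but the cancellation exploited is identical). Your verification of $|Z\psi|^2=(\pa_t\psi+\pa_r\psi)^2+|x|^{-2}(\Omega\psi)^2$ and the boundedness of the weight for $\rho>1$ match the paper's computation, so no gap.
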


For the convenience of the readers, we will give the proof of this lemma 
in the appendix.


Next we introduce an auxiliary notation related to the operator $Z$.
For a non-negative integer $s$ and a smooth function $\psi$, we put 
$$
 |\psi(t,x)|_{Z,s}
 =
 \sum_{k=1}^2 \sum_{|\alpha|\le s} |Z_k \Gamma^\alpha \psi(t,x)|. 
$$
Observing that 
\begin{align*}
&S=x_1Z_1+x_2Z_2+(t-|x|)\pa_t, \\
&L_k=|x| Z_k + (t-|x|)\pa_k,  \quad k=1,2, \\
&\Omega=x_1Z_2-x_2Z_1,
\end{align*}
we can easily obtain the following lemma:

\begin{lem}\label{NullEst03}
For $s\in \ZP$, we have
$$
 |\Gamma \psi(t,x)|_s
 \le 
 C |x|\, |\psi(t,x)|_{Z,s}+\jb{t-|x|} |\pa \psi(t,x)|_s
$$
with a positive constant $C$.
\end{lem}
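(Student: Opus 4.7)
The plan is to induct on $s$, with the three algebraic identities displayed immediately before the lemma doing the real work at $s=0$.

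\emph{Base case $s=0$.} Each of $S$, $L_1$, $L_2$, $\Omega$ is expressed as $|x|$ times a linear combination of $Z_1,Z_2$ plus $(t-|x|)$ times a linear combination of $\pa_a$'s, so
$$
|S\psi|+|L_1\psi|+|L_2\psi|+|\Omega\psi|
\le C|x|\,|\psi|_{Z,0}+C\jb{t-|x|}|\pa\psi|_0.
$$
For the flat derivatives $\pa_0,\pa_1,\pa_2$ the bound $|\pa_a\psi|\le\jb{t-|x|}|\pa\psi|_0$ is trivial. Summing gives the $s=0$ statement.

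\emph{Inductive step.} Applying the base case to $\Gamma^\alpha\psi$ for each $|\alpha|\le s$ and summing yields
$$
\sum_{|\alpha|\le s}\sum_{a=0}^{6}\bigl|\Gamma_a\Gamma^\alpha\psi(t,x)\bigr|
\le C|x|\,|\psi(t,x)|_{Z,s}+C\jb{t-|x|}|\pa\psi(t,x)|_s,
$$
using that $\sum_{|\alpha|\le s}|\Gamma^\alpha\psi|_{Z,0}=|\psi|_{Z,s}$ by definition and that $\sum_{|\alpha|\le s}|\pa\Gamma^\alpha\psi|_0\le C|\pa\psi|_s$ by \eqref{Comm03}. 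The left-hand side differs from $|\Gamma\psi|_s$ only in the order of the operators, and the commutation identity $[\Gamma_a,\Gamma_b]=\Sum_{0\le c\le 6}\Gamma_c$ gives
$$
|\Gamma^\alpha\Gamma_a\psi|\le|\Gamma_a\Gamma^\alpha\psi|+C|\Gamma\psi|_{s-1}
$$
for each $|\alpha|\le s$ and each $a$. Summing over all $(\alpha,a)$ and invoking the induction hypothesis to absorb the $|\Gamma\psi|_{s-1}$ term closes the estimate.

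The one point to verify carefully is that the commutator error $[\Gamma^\alpha,\Gamma_a]\psi$ can indeed be controlled by $|\Gamma\psi|_{s-1}$ rather than by the uncontrolled quantity $|\psi|_{s-1}$. This is because each single commutator $[\Gamma_c,\Gamma_b]$ is itself a linear combination of $\Gamma$-operators of order one; iterating this, swapping $\Gamma_a$ past the $|\alpha|$ factors in $\Gamma^\alpha$ decreases the total operator count by at most one per swap, so every resulting term retains at least one $\Gamma$ factor and therefore appears in $|\Gamma\psi|_{s-1}$. With this observation the induction closes and produces the claimed bound.
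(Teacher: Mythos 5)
Your proof is correct and follows the paper's intended route: the paper derives the lemma directly from the displayed identities for $S$, $L_k$, $\Omega$ in terms of $Z_1,Z_2$, $\pa$, and you simply make explicit the routine higher-order bookkeeping (applying the $s=0$ case to $\Gamma^\alpha\psi$, using \eqref{Comm03}, and reordering via the commutation relations). The observation that the commutator errors retain at least one $\Gamma$ factor, so they are absorbed by the induction hypothesis, is exactly the point the paper leaves implicit in ``we can easily obtain.''
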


As a consequence, we have the following: 
\begin{cor} \label{Cor_NullEst} 
 Suppose that \eqref{null2} is satisfied. 
 For $s \in \ZP$, we have
$$
 |F^{\rm q}(\pa u)|_s
  \le 
  C \left(
     |\pa u||u|_{Z,s} + |\pa u|_{[s/2]}|u|_{Z,s-1}
   \right)
  +
  C \jb{ t + |x| }^{-1}   |u|_{[s/2]+1}|\pa u|_s
$$
with a positive constant $C$.
Here, $|\cdot|_{Z,-1}$ is regarded as $0$.
\end{cor}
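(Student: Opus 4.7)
The plan is to combine Lemma~\ref{NullEst02} with Lemma~\ref{NullEst03} and Lemma~\ref{Lem_Lind} in a bookkeeping argument. Lemma~\ref{NullEst02} already provides the enhanced $\jb{t+|x|}^{-1}$ decay for $F^{\rm q}(\pa u)$, at the price of having $|\Gamma u|_{\cdot}$ factors in place of $|u|_{Z,\cdot}$; Lemma~\ref{NullEst03} is precisely the tool to trade a $\Gamma$-derivative for either a $|x|$-weighted $Z$-derivative or a $\jb{t-|x|}$-weighted $\pa$-derivative, and Lemma~\ref{Lem_Lind} lets us absorb the leftover $\jb{t-|x|}$ factors at the cost of raising the order of $|u|$ by one.

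Concretely, I would start from the three terms on the right-hand side of Lemma~\ref{NullEst02} and process them one by one. For the first term $\jb{t+|x|}^{-1}|\pa u||\Gamma u|_s$, apply Lemma~\ref{NullEst03} to $|\Gamma u|_s$: the $|x||u|_{Z,s}$ part combines with $\jb{t+|x|}^{-1}$ (since $|x|\le \jb{t+|x|}$) to produce the leading $|\pa u||u|_{Z,s}$ contribution, while the $\jb{t-|x|}|\pa u|_s$ part is handled by writing $\jb{t-|x|}|\pa u|\le C|u|_1\le C|u|_{[s/2]+1}$ via Lemma~\ref{Lem_Lind}, giving the weighted term $\jb{t+|x|}^{-1}|u|_{[s/2]+1}|\pa u|_s$.

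For the second term $\jb{t+|x|}^{-1}|\pa u|_{[s/2]}|\Gamma u|_{s-1}$, the same two-step procedure applied to $|\Gamma u|_{s-1}$ yields $|\pa u|_{[s/2]}|u|_{Z,s-1}$ from the $|x|$ piece, and (using $\jb{t-|x|}|\pa u|_{[s/2]}\le C|u|_{[s/2]+1}$ and $|\pa u|_{s-1}\le|\pa u|_s$) the same weighted contribution as before from the $\jb{t-|x|}$ piece. For the third term $\jb{t+|x|}^{-1}|\Gamma u|_{[s/2]}|\pa u|_s$, the cleanest route is the trivial bound $|\Gamma u|_{[s/2]}\le C|u|_{[s/2]+1}$, which again reproduces the $\jb{t+|x|}^{-1}|u|_{[s/2]+1}|\pa u|_s$ contribution directly.

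There is no real obstacle here; the only thing to watch is the index bookkeeping, i.e., making sure every application of Lemma~\ref{Lem_Lind} is at an order no larger than $[s/2]$ so that the final bound in $|u|$ is $|u|_{[s/2]+1}$ and not something worse. Once the three terms from Lemma~\ref{NullEst02} are collected, they add up exactly to the three families appearing in the statement of the corollary, completing the proof.
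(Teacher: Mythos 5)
Your argument is correct and is essentially the paper's own proof: both start from Lemma~\ref{NullEst02} and then use Lemma~\ref{NullEst03} to trade $|\Gamma u|_s$ and $|\Gamma u|_{s-1}$ for $|x|$-weighted $Z$-norms plus $\jb{t-|x|}$-weighted derivative norms, absorbing the latter via Lemma~\ref{Lem_Lind} into $|u|_{1}\le|u|_{[s/2]+1}$ and $|u|_{[s/2]+1}$ respectively, with the third term handled by the trivial bound $|\Gamma u|_{[s/2]}\le C|u|_{[s/2]+1}$. The index bookkeeping you flag is exactly the only point of care, and you have it right.
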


\begin{proof}
By Lemmas~\ref{NullEst03} and \ref{Lem_Lind}, we have 
\begin{align*} 
 |\pa u||\Gamma u|_s
 &\le 
 C \jb{t+|x|} |\pa u||u|_{Z,s} + C\jb{t-|x|} |\pa u||\pa u|_{s}\\
  &\le 
 C \jb{t+|x|} |\pa u||u|_{Z,s} + C |u|_{1}|\pa u|_{s}
\end{align*}
as well as 
\begin{align*} 
 |\pa u|_{[s/2]} |\Gamma u|_{s-1}
  &\le 
   C \jb{t+|x|} |\pa u|_{[s/2]}|u|_{Z,s-1} 
    +
   C\jb{t-|x|} |\pa u|_{[s/2]}|\pa u|_{s-1}
  \\
  &\le 
   C \jb{t+|x|} |\pa u|_{[s/2]}|u|_{Z,s-1} 
   + 
   C |u|_{[s/2]+1}|\pa u|_{s-1}.
\end{align*}
The desired inequality follows immediately from them and Lemma~\ref{NullEst02}.
\end{proof}

\subsection{The profile equation}

Let $0<T\le \infty$, and let  $u$ be the solution to \eqref{eq}--\eqref{data}
on $[0, T)\times \R^2$.
We suppose that 
\begin{align}
 \supp f\cup \supp g \subset B_R 
\label{supp_0}
\end{align}
for some $R>0$, where $B_M=\{x\in \R^2;|x|\le M\}$ for $M>0$. 
Then, from the property of finite propagation, we have
\begin{align}
 \supp u(t,\cdot)\subset B_{t+R},\quad 0\le t<T.
\label{supp_t}
\end{align}

Now we put $r=|x|$, $\omega=(\omega_1, \omega_2)=x/|x|$ 
so that 
\begin{align} 
 r^{1/2}\Box \phi 
 =
 \pa_+ \pa_-(r^{1/2}\phi)
 -
 \frac{1}{4r^{3/2}}\left( 4 \Omega^2 + 1 \right) \phi.
 \label{dal_polar}
\end{align}
We define $U=(U_j)_{1\le j\le N}$   
by
\begin{align} 
U(t, x):=D_-\bigl(r^{1/2} u(t, x) \bigr),
\quad (t, x)\in [0,T)\times (\R^2\setminus\{0\})
 \label{U}
\end{align}
for the solution $u$ of \eqref{eq}. In view of Lemma~\ref{Lem_Aux01}, 
the asymptotic profile as 
$t \to \infty$ of $\pa u$  should be given by $ \hat{\omega} U/r^{1/2}$, 
because we can expect $|u(t,x)|_1\to 0$ as $t\to\infty$. 
Also it follows from \eqref{dal_polar} that 
\begin{align}
 \pa_+U(t, x)= -\frac{1}{2t} F^{\rm c, red}\bigl(\omega, U(t,x)\bigr)+H(t, x),
 \label{ode_0}
\end{align}
where $F^{\rm c, red}(\omega, Y)$ is defined by 
\eqref{ReducedNonlinearityC}, and $H=H(t,x)$ is given by
\begin{align}
\label{DefRemainder}
H=& 
-\frac{1}{2}\left(r^{1/2} F(\pa u)-\frac{1}{t}F^{\rm c, red}(\omega, U)\right)
-\frac{1}{8r^{3/2}} \left( 4\Omega^2+1 \right) u.
\end{align}
As Lemma~\ref{Lem_Remainder} below indicates, $H$ can be regarded as a 
remainder when \eqref{null2} is satisfied. For these reasons, we call \eqref{ode_0} {\em the profile equation} 
associated with \eqref{eq}, which plays an important role 
in our analysis. 

We also need an analogous equation for $\Gamma^\alpha u$ with a multi-index 
$\alpha\in \Z_+^7$. For this purpose, we put
\begin{equation}
\label{DefUAlpha}
  U^{(\alpha)}(t,x)
  := 
  D_-\bigl(r^{1/2} \Gamma^\alpha u(t,x)\bigr).
\end{equation}
Since 
$\dal(\Gamma^\alpha u)=\widetilde{\Gamma}^\alpha\left(F(\pa u)\right)$,
we deduce from \eqref{dal_polar} that 
\begin{align}
\pa_+U^{(\alpha)}= -\frac{1}{2t} G_{\alpha}(\omega, U, U^{(\alpha)})+H_\alpha
 \label{ode_alpha}
\end{align}
for $|\alpha|\ge 1$,
where 
$G_\alpha=(G_{\alpha,j})_{1\le j\le N}$    
and $H_\alpha$ are given by
$$
 G_{\alpha,j}\left(\omega, U, U^{(\alpha)}\right)
 = \sum_{k=1}^N\frac{\pa F_j^{\rm c, red}}{\pa Y_k}(\omega, U) U^{(\alpha)}_k 
$$
and 
\begin{align}
H_\alpha(t,x)
= &
-\frac{1}{2}\left(
 r^{1/2}\widetilde{\Gamma}^\alpha \bigl( F(\pa u)\bigr)
 - 
 \frac{1}{t}G_{\alpha}\left(\omega, U, U^{(\alpha)}\right)
 \right) 
  \nonumber\\
 &{}-\frac{1}{8r^{3/2}}\left( 4 \Omega^2 + 1 \right) \Gamma^\alpha u,
\label{DefHAlpha}
\end{align}
respectively. 

We close this section with preliminary estimates for 
$H$ and $H_{\alpha}$, in terms of the solution $u$, near the light cone. 
We put 
\begin{align*}
 \Lambda_{T,R}:=\{(t,x)\in [0, T)\times \R^2;\, 1\le t/2\le |x|\le t+R\}.
\end{align*}
Note that we have
$$
\jb{t+|x|}^{-1}\le |x|^{-1}\le 2t^{-1}\le 3(1+t)^{-1}\le 3(\jb{R}+2)\jb{t+|x|}^{-1}
$$
for $(t,x)\in \Lambda_{T, R}$.
In other words, the weights $\jb{t+|x|}^{-1}$, 
$(1+t)^{-1}$, $|x|^{-1}$ and $t^{-1}$ are equivalent to each other in 
$\Lambda_{T,R}$. 
For $s \in \ZP$, we also introduce an auxiliary notation 
$|\cdot |_{\sharp,s}$ by
\begin{equation}
|\phi(t,x)|_{\sharp,s}:=|\pa \phi(t,x)|_s+\jb{t+|x|}^{-1}|\phi(t,x)|_{s+1}.
\label{norm_sharp}
\end{equation}

\begin{lem}\label{Lem_Remainder}
Suppose that the null condition \eqref{null2} is satisfied.
There is a positive constant $C$, which is independent of $T$, such that 
\begin{equation}
|H(t,x)|
 \le  C 
 t^{-1/2}(1+t |\pa u|^2+|u|_{\sharp,0})|u|_{\sharp,0}|u|_1
 +
 C t^{-3/2}|u|_2
\label{Est_H}
\end{equation}
for $(t,x)\in \Lambda_{T,R}$, provided that $\sup_{(t,x)\in \Lambda_{T,R}}|\pa u(t,x)|$ is small enough. 
Also, for $s \ge 1$, there is a positive constant $C_s$,not depending on $T$, 
such that  
\begin{align}
 \sum_{|\alpha|=s}|H_\alpha(t,x)|
  \le 
  &C_s 
    t^{1/2}|\pa u|_{s-1}^3
    +
   C_s  t^{-1/2}(1+t|\pa u|_s^2+|u|_{\sharp,s})|u|_{\sharp,s}|u|_{s+1}
  \nonumber\\
  & + 
   C_s  t^{-3/2}|u|_{s+2}
  \label{Est_H_alpha}
\end{align}
for $(t,x)\in \Lambda_{T,R}$, provided that $\sup_{(t,x)\in \Lambda_{T,R}}|\pa u(t,x)|_{[s/2]}$
is small enough.
\end{lem}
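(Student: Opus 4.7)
The plan is to decompose $H$ and $H_\alpha$ into pieces that can be treated separately using the null structure of the quadratic part, the pointwise approximation of $\pa u$ by $r^{-1/2}\hat\omega U$ supplied by Lemma~\ref{Lem_Aux01}, and the equivalence of the weights $r^{-1}$, $t^{-1}$ and $\jb{t+r}^{-1}$ on $\Lambda_{T,R}$. Throughout I would use freely that $|U|\le Cr^{1/2}|u|_{\sharp,0}$ (which follows from the definition of $U$ together with $r^{-1}|u|\le C|u|_{\sharp,0}$ on $\Lambda_{T,R}$) and that $\jb{t-r}|\pa u|\le C|u|_1$ by Lemma~\ref{Lem_Lind}.

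For the bound on $H$, the last term in \eqref{DefRemainder} is trivially dominated by $Ct^{-3/2}|u|_2$. Expanding $F=F^{\rm q}+F^{\rm c}+O(|\pa u|^4)$, the quadratic piece is controlled by Lemma~\ref{NullEst02} at level $s=0$, yielding $r^{1/2}|F^{\rm q}(\pa u)|\le Ct^{-1/2}|u|_{\sharp,0}|u|_1$; the quartic remainder contributes $r^{1/2}|\pa u|^4\le Ct^{-1/2}\cdot t|\pa u|^2\cdot|u|_{\sharp,0}|u|_1$ via $|\pa u|^2\le|u|_{\sharp,0}|u|_1$. For the cubic comparison I would split
\begin{equation*}
 r^{1/2}F^{\rm c}(\pa u)-\tfrac{1}{t}F^{\rm c,red}(\omega,U)
  = r^{1/2}\bigl[F^{\rm c}(\pa u)-F^{\rm c}(r^{-1/2}\hat\omega U)\bigr]
    +(r^{-1}-t^{-1})F^{\rm c,red}(\omega,U),
\end{equation*}
relying on the cubic homogeneity identity $F^{\rm c}(r^{-1/2}\hat\omega U)=r^{-3/2}F^{\rm c,red}(\omega,U)$. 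Lemma~\ref{Lem_Aux01} gives $|\pa u-r^{-1/2}\hat\omega U|\le Ct^{-1}|u|_1$ on $\Lambda_{T,R}$, so a standard cubic-expansion bound produces $Ct^{-1/2}|u|_{\sharp,0}^2|u|_1$ for the first bracket. For the second bracket, $|r^{-1}-t^{-1}|\le C|t-r|t^{-2}$ on $\Lambda_{T,R}$, and I would trade each factor $|t-r|\cdot|\pa u|$ arising from $|U|^2$ for $C|u|_1$ via Lemma~\ref{Lem_Lind}, eventually absorbing everything into the stated estimate \eqref{Est_H}.

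For $H_\alpha$ with $|\alpha|=s\ge 1$ the same template applies, but a careful Leibniz expansion of $\widetilde\Gamma^\alpha F^{\rm c}(\pa u)$ is required. I would split the chain-rule sum into a single principal term, in which all of $\widetilde\Gamma^\alpha$ lands on one factor $\pa u$, and lower-order remainders in which derivatives are distributed over several factors. After using Lemma~\ref{Lem_Aux01} applied to $\Gamma^\alpha u$ to replace $\pa\Gamma^\alpha u$ by $r^{-1/2}\hat\omega U^{(\alpha)}$, the principal term matches $t^{-1}G_\alpha(\omega,U,U^{(\alpha)})$ modulo expansion and weight errors handled exactly as for $H$, giving the $t^{-1/2}(1+|u|_{\sharp,s})|u|_{\sharp,s}|u|_{s+1}$ contributions. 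The lower-order Leibniz remainders involve only factors $|\pa u|_k$ with $k\le s-1$ and pay the full $r^{1/2}\le Ct^{1/2}$ weight without any null cancellation, producing the announced $t^{1/2}|\pa u|_{s-1}^3$ loss. The quadratic contribution is handled by Lemma~\ref{NullEst02} at level $s$, the quartic remainder is treated exactly as for $s=0$, and the $r^{-3/2}(4\Omega^2+1)\Gamma^\alpha u$ term contributes $Ct^{-3/2}|u|_{s+2}$.

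The principal obstacle lies in the $H_\alpha$ analysis: one has to identify precisely the component of $\widetilde\Gamma^\alpha F^{\rm c}(\pa u)$ that reproduces $G_\alpha$ after the $r^{-1/2}\hat\omega U$ substitution, and check that every residual Leibniz product is controlled by $|\pa u|_{s-1}^3$ so that the $t^{1/2}$ loss incurred in the absence of null cancellation is exactly the one announced in \eqref{Est_H_alpha}; a parallel but simpler identification is needed for the derivatives of $F^{\rm q}$ to convert them into $|u|_{\sharp,s}|u|_{s+1}$-type factors via the null condition.
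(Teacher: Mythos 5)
Your proposal is correct and follows essentially the same route as the paper: your homogeneity splitting of $r^{1/2}F^{\rm c}(\pa u)-t^{-1}F^{\rm c,red}(\omega,U)$ is exactly the paper's decomposition into $r^{1/2}F(\pa u)-r^{-1}F^{\rm c,red}(\omega,U)$ plus the weight-error term $\frac{t-r}{2rt}F^{\rm c,red}(\omega,U)$, handled with Lemmas~\ref{Lem_Aux01}, \ref{Lem_Lind} and \ref{NullEst02} just as in the paper, and your ``principal term'' in the Leibniz expansion of $\widetilde\Gamma^\alpha F^{\rm c}(\pa u)$ is precisely the paper's $\widetilde F^{\rm c}_\alpha$, with the remaining products of order at most $s-1$ giving the $t^{1/2}|\pa u|_{s-1}^3$ loss. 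No essential difference or gap to report.
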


\begin{proof}
Let $(t,x)=(t,r\omega)\in \Lambda_{T,R}$ in what follows.
We put
$$
F^{\rm h}(\pa u)=F(\pa u)-\bigl(F^{\rm q}(\pa u)+F^{\rm c}(\pa u)\bigr),
$$
so that we have $F^{\rm h}(\pa u)=O(|\pa u|^4)$ for small $\pa u$.

First we consider the estimate for $H$. We decompose it as follows: 
\begin{align*}
 H
 = &
 -\frac{1}{2}\Bigl( r^{1/2} F(\pa u) - r^{-1} F^{\rm c, red}(\omega, U) \Bigr) 
 {}-\frac{t-r}{2rt}F^{\rm c, red}(\omega, U)\\
 &{}-\frac{1}{8r^{3/2}} \left( 4 \Omega^2 + 1 \right) u.
\end{align*}
It is easy to see that the third term can be dominated by $Ct^{-3/2}|u|_2$. 
To estimate the second term, we note that 
\begin{align*}
|U|
 \le 
 r^{1/2}|D_-u|+\frac{C}{\jb{t+r}^{1/2}}|u|
 \le 
 C r^{1/2} |u|_{\sharp,0}
\end{align*}
and that 
\begin{align*}
 \jb{t-r}|U|
 \le 
 C t^{1/2} \left( 
  \jb{t-r}|\pa u|_{0}  +  \frac{\jb{t-r}}{\jb{t+r}}|u|_{0} 
 \right) 
 \le 
 C t^{1/2} |u|_{1},
\end{align*}
where we have used Lemma~\ref{Lem_Lind} to get the last inequality.
Then we obtain 
\begin{align*}
 \frac{|t-r|}{rt}|F^{\rm c, red}(\omega, U)|
 \le 
 C t^{-1} \jb{t-r} |U| \cdot \bigl(r^{-1/2}|U| \bigr)^{2}
 \le 
 C t^{-1/2} |u|_{1} |u|_{\sharp,0}^2.
\end{align*}
As for the  the first term, 
we deduce from Lemmas~\ref{NullEst02} and \ref{Lem_Aux01} that
\begin{align*}
 &|r^{1/2} F(\pa u) - r^{-1} F^{\rm c, red}(\omega, U) |\\
 & \le 
 |r^{1/2}F^{\rm q}(\pa u)|
 +
 |r^{1/2}F^{\rm h}(\pa u)|
 +
 |r^{1/2} F^{\rm c}(\pa u)-r^{-1}F^{\rm c,red}(\omega, U)|
 \\
 &\le 
  Ct^{-1/2} |u|_1|\pa u|+Ct^{1/2}|\pa u|^4\\
  &\phantom{\le\ } 
   + 
    \frac{C}{r}    
    \sum_{k,l,m}\sum_{a, b, c}  
  \left|
    (r^{1/2}\pa_a u_k)(r^{1/2}\pa_b u_l)(r^{1/2}\pa_c u_m)
    -
    (\omega_a U_k)(\omega_b U_l)(\omega_c U_m)
 \right|\\
&\le 
  Ct^{-1/2} |u|_1|u|_{\sharp,0}
  +
  Ct^{1/2}|\pa u|^2 |u|_{\sharp,0}|u|_1      
  \\
 &\phantom{\le\ }+
  C\bigl( |\pa u|+ r^{-1/2}|U| \bigr)^2|r^{1/2} \pa u-\hat{\omega} U|\\
&\le 
 Ct^{-1/2} (1+t |\pa u|^2+|u|_{\sharp,0})|u|_{\sharp,0}|u|_1.
\end{align*}
Summing up,  we arrive at \eqref{Est_H}.

Next we turn to the estimate for $H_{\alpha}$ with $|\alpha|=s \ge 1$. 
We set 
$\widetilde{F}^{\rm c}_\alpha
 =(\widetilde{F}^{\rm c}_{\alpha, j})_{1\le j\le N}$    
with 
\begin{align*}
 \widetilde{F}^{\rm c}_{\alpha, j}
 =\sum_{k,l,m=1}^N \sum_{a,b,c=0}^{2} 
 C_{jklm}^{abc} \bigl\{
  (\Gamma^\alpha \pa_a u_k)(\pa_b u_l)(\pa_c u_m)
  &+ 
  (\pa_a u_k)(\Gamma^\alpha \pa_b u_l)(\pa_c u_m)
  \\
  &+
  (\pa_a u_k)(\pa_b u_l)(\Gamma^\alpha \pa_c u_m)
 \bigr\}
\end{align*}
to split $H_{\alpha}$ into the following form: 
\begin{align*}
 H_{\alpha}
 =& 
  -\frac{r^{1/2}}{2}\widetilde{\Gamma}^{\alpha}
   \bigl(F^{\rm q}(\pa u)+F^{\rm h}(\pa u)\bigr)
  -
  \frac{r^{1/2}}{2} \left(\widetilde{\Gamma}^{\alpha} (F^{\rm c}(\pa u))
  -
  \widetilde{F}^{\rm c}_{\alpha} \right)\\ 
 &-\frac{1}{2}
 \left(r^{1/2} \widetilde{F}^{\rm c}_{\alpha} - r^{-1} G_{\alpha} \right) 
 -\frac{t-r}{2rt}G_{\alpha} 
 -\frac{1}{8r^{3/2}} \left( 4 \Omega^2 + 1 \right) \Gamma^{\alpha}u.
\end{align*}
The second term can be estimated by $Ct^{1/2} |\pa u|_{s-1}^3$, 
since it consists of a linear combination of the terms 
having the form 
$$
 r^{1/2}(\Gamma^{\beta}\pa_a u_k)(\Gamma^{\gamma}\pa_b u_l)
 (\Gamma^{\delta}\pa_c u_m)
$$ 
with $k,l,m\in \{1,\ldots, N\}$, $a, b, c \in \{0,1,2\}$, and  
$|\beta|, |\gamma|, |\delta| \le s-1$. 
Other four terms can be treated in the same way as in the previous case; 
they are dominated by 
$$
 C t^{-1/2}(1+t|\pa u|_s^2+|u|_{\sharp,s})|u|_{\sharp,s}|u|_{s+1} + Ct^{-3/2}|u|_{s+2}.
$$ 
Therefore we obtain \eqref{Est_H_alpha} as desired.
\end{proof}

\section{Proof of the small data global existence}\label{PSDGE}

The argument of this section is almost parallel to that of Section 5 in 
\cite{KMaS}, where quadratic semilinear systems of wave equations in $\R^3$ 
are considered. 
However, the argument becomes slightly more complicated 
because we are considering lower dimensional case here.


Let $u(t,x)$ be a smooth solution to 
\eqref{eq}--\eqref{data} on $[0, T_0)\times\R^2$ with some 
$T_0\in (0,\infty]$. For $0<T\le T_0$, we put 
\begin{align*}
 e[u](T)= \sup_{(t,x)\in[0,T)\times \R^2}
 \Bigl(
 &\jb{t+|x|}^{(1/2)-\mu}|u(t,x)|_{k+1}
  \\
 &+\jb{t+|x|}^{1/2} \jb{t-|x|}^{1-\mu}  |\pa u(t,x)| 
  \\
 &+ 
 \jb{t+|x|}^{(1/2)-\nu}  \jb{t-|x|}^{1-\mu}
 |\pa u(t,x)|_{k} 
 \Bigr)
\end{align*}
with some $\mu$, $\nu>0$ and a positive integer $k$.
We also put
$$
e[u](0)=\lim_{T\to +0} e[u](T).
$$
Observe that there is a positive constant $\eps_1$ such that 
$0<\eps\le \eps_1$ implies $e[u](0)\le \sqrt{\eps}/2$, 
because we have $e[u](0)=O(\eps)$.

The main step in the proof of Theorem~\ref{Thm_SDGE} is to show the following:

\begin{prop} \label{Prop_Apriori}
Let $k\ge 4$, $0<\mu < 1/10$ and $0< (8k+7) \nu \le \mu$. 
There exist positive constants $\eps_2$ and $M$, which depend only on 
$k$, $\mu$ and $\nu$,  such that 
\begin{align} 
 e[u](T)\le \sqrt{\eps}
\label{est_before}
\end{align} 
implies 
\begin{align} 
 e[u](T) \le M\eps, 
\label{est_after}
\end{align} 
provided that $0<\eps \le \eps_2$ and $0<T\le T_0$.
\end{prop}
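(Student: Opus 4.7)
The plan is a standard continuity argument: assuming \eqref{est_before}, I would derive \eqref{est_after} with $M$ independent of $T$, whereupon local existence and a connectedness argument give $T_0=\infty$. The three contributions to $e[u](T)$ are closed in two stages. First I would establish a weighted $L^{2}$-bound on $\|\pa u(t)\|_{k+1}$ via Alinhac's generalized energy inequality (Lemma~\ref{GhostEnergy}); then I would convert this $L^{2}$-bound into the required pointwise estimates for $|\pa u|$ and $|\pa u|_{k}$ through H\"ormander's $L^{1}$--$L^{\infty}$ inequality (Lemma~\ref{Lem_Hor}) combined with the profile analysis based on \eqref{ode_0}--\eqref{ode_alpha}.

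For the energy step, apply Lemma~\ref{GhostEnergy} to $\Gamma^{\alpha}u$ for each $|\alpha|\le k+1$; by \eqref{Comm01} the source is $\widetilde\Gamma^{\alpha}F(\pa u)$, which I split into its quadratic, cubic, and higher-order parts. Corollary~\ref{Cor_NullEst} gives $|F^{\rm q}(\pa u)|_{s}\le C|\pa u|\,|u|_{Z,s}+(\text{controlled lower-order remainders})$, and a Cauchy--Schwarz split with weight $\jb{\tau-|y|}^{\rho}$ lets one absorb $\int_{0}^{t}\!\int |u|_{Z,s}^{2}/\jb{\tau-|y|}^{\rho}\,dy\,d\tau$ into the ghost term on the left, leaving a factor controlled by the pointwise decay of $\pa u$ built into $e[u](T)$. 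The cubic and higher-order pieces pose no trouble: the a priori bound $e[u](T)\le\sqrt{\eps}$ forces $|\pa u|^{2}$ to be time-integrable (up to $\jb{\tau}^{-1}$), so a Gr\"onwall argument gives $\|\pa u(t)\|_{k+1}\le C\eps\jb{t}^{C\sqrt{\eps}}$, absorbed by the $\jb{t+|x|}^{\nu}$ budget for $\eps$ sufficiently small.

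To upgrade to pointwise bounds I use Lemma~\ref{Lem_Hor}: the data contribution is handled by~(ii), while the forced contribution needs $L^{1}$-in-space bounds on $\Gamma^{\beta}F(\pa u)$, which come by Cauchy--Schwarz from the energy bound above together with the support restriction \eqref{supp_t}. The delicate piece is the cubic term near the cone, where I switch to the profile equation \eqref{ode_0}: pairing with $\mathcal{A}(\omega)U$ gives
\begin{equation*}
 \pa_{+}\bigl(U\cdot\mathcal{A}(\omega)U\bigr)
 =-\frac{1}{t}\,U\cdot\mathcal{A}(\omega)F^{\rm c, red}(\omega,U)+2\,U\cdot\mathcal{A}(\omega)H,
\end{equation*}
and condition~(Ag) makes the first term on the right non-positive. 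Integrating along outgoing rays $t-|x|=\mathrm{const}$ and dominating $H$ by Lemma~\ref{Lem_Remainder} (whose integrability along rays is supplied by the quadratic null condition) yields $|U(t,x)|\le C\eps$ uniformly, which via Lemma~\ref{Lem_Aux01} gives the desired bound $|\pa u|\le C\eps\jb{t+|x|}^{-1/2}\jb{t-|x|}^{-(1-\mu)}$ in the interior region $\Lambda_{T,R}$. The same scheme applied to \eqref{ode_alpha} with the remainder estimate \eqref{Est_H_alpha} handles $|\pa u|_{k}$, modulo the admissible $\jb{t+|x|}^{\nu}$ loss.

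The main obstacle is the critical nature of the cubic nonlinearity in two space dimensions: without exploiting~(Ag), the profile $U$ would typically grow by a power of $\log t$, breaking the pointwise decay encoded in $e[u](T)$. The heart of the argument is therefore the monotone-in-time ODE estimate along outgoing rays coming from the $\mathcal{A}$-weighted pairing. A secondary technical difficulty is juggling the three small exponents $\mu$, $\nu$, $\sqrt{\eps}$ so that the sub-power growth from the energy step, the $\nu$-loss in the higher-order pointwise bound, and the loss of one power of $\jb{t-|x|}$ paid for by Lemma~\ref{Lem_Lind} remain mutually compatible; the numerical condition $(8k+7)\nu\le\mu\le 1/10$ in the hypothesis is exactly what makes the book-keeping close.
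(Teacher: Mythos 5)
Your overall strategy coincides with the paper's: Alinhac's ghost-weight energy inequality (Lemma~\ref{GhostEnergy}) combined with Corollary~\ref{Cor_NullEst} to absorb the $Z$-terms, H\"ormander's $L^1$--$L^\infty$ estimate (Lemma~\ref{Lem_Hor}) to convert energies into pointwise decay, and the $\mathcal{A}(\omega)$-weighted profile ODE along rays, with (Ag) providing the good sign, near the light cone. Two steps, however, would fail as literally written. First, energy bounds for $|\alpha|\le k+1$ only are not enough to run the H\"ormander step: to bound $|u|_{k+1}$ pointwise (and $|u|_{k+2}$, which Lemma~\ref{Lem_Remainder} requires for the remainders $H$, $H_\alpha$) you need $\bigl\||F(\pa u)|_{k+3}\bigr\|_{L^1}$, and to control its quadratic part you first need a rough pointwise bound on $|u|_{k+4}$, hence $\bigl\||F(\pa u)|_{k+5}\bigr\|_{L^1}$; by the Schwarz inequality and \eqref{supp_t} these demand energies of order well above $k+1$. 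The paper builds an inductive hierarchy $E_l(t)^{1/2}\le C\eps\jb{t}^{C_*\sqrt{\eps}+2l\nu}$ for all $l\le 2k+1$, and this is exactly where the hypotheses $k\ge 4$ and $(8k+7)\nu\le\mu$ are consumed (e.g.\ $2(4k+3)\nu\le\mu-\nu$, $[(k+5)/2]\le k$); your sketch neither reaches these orders nor tracks the $\nu$-loss per derivative.

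Second, in the near-cone step the conclusion ``$|U(t,x)|\le C\eps$ uniformly'' does not imply $|\pa u|\le C\eps\jb{t+|x|}^{-1/2}\jb{t-|x|}^{-(1-\mu)}$: the factor $\jb{t-|x|}^{-(1-\mu)}$ must be produced by the ray-wise ODE argument itself, not appended afterwards. In the paper one integrates \eqref{ode_0} from the time $t_{0,\sigma}$ (comparable to $\jb{\sigma}$) at which the ray $x=(t+\sigma)\omega$ enters $\Lambda_{T,R}$, where the rough interior estimate gives $|U(t_{0,\sigma})|\le C\eps\jb{\sigma}^{\mu-1}$, and uses the $\sigma$-weighted remainder bound $|H|\le C\eps t^{2\mu-(3/2)}\jb{t-|x|}^{-\mu-(1/2)}$; only then does the (Ag)-monotone pairing give $|U|\le C\eps\jb{\sigma}^{\mu-1}$ along the ray, which is the bound actually needed to recover the weight $\jb{t-|x|}^{1-\mu}$ in $e[u](T)$. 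The same refinement is needed at order $k$ via \eqref{ode_alpha}, where there is no sign structure in $G_\alpha$ and one instead pays a factor $t^{C\eps^2}$ per Gronwall step (iterated over the order, whence the requirement $3^{k-1}C^*\eps^2\le\nu$), which is the precise source of the ``admissible $\nu$-loss'' you invoke. Without these two refinements the quantity $e[u](T)$ cannot be closed, so the proposal needs them to be supplied before it constitutes a proof.
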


Once the above proposition is obtained, 
the small data global existence for \eqref{eq}--\eqref{data} 
can be derived by the standard continuity argument: 
Let $T^*$ be the supremum of such $T\in (0,\infty)$ that the
Cauchy problem (\ref{eq})--(\ref{data}) admits a unique classical solution
$u\in C^\infty([0,T)\times \R^2;\R^N)$, and assume that $T^*<\infty$. 
Then, it follows from the standard blow-up criterion (see, e.g., \cite{sogge}) 
that 
\begin{align}
 \lim_{t\to T^*-0} \left(\|u(t,\cdot)\|_{L^\infty(\R^2)}+\|\pa u(t, \cdot)\|_{L^\infty(\R^2)}\right)=\infty.
 \label{blowup}
\end{align}
On the other hand, by setting
$$
T_*=\sup\left\{T\in [0,T^*)\,; e[u](T)\le \sqrt{\eps}  \right\},
$$
we can see that Proposition~\ref{Prop_Apriori} yields $T_*=T^*$,
provided that $\eps$ is small enough. 
Indeed, if $T_*<T^*$, then we have $e[u](T_*)\le\sqrt{\eps}$, and
Proposition~\ref{Prop_Apriori} implies that
$$
 e[u](T_*)\le M \eps \le \frac{\sqrt{\eps}}{2}
$$
for $0<\eps\le \min\left\{\eps_1, \eps_2, 1/(4M^2)\right\}$
(note that we have $T_*>0$ for $\eps\le \eps_1$). 
Then, by the continuity of the mapping $[0,T^*) \ni T \mapsto e[u](T)$, 
we can take $\delta>0$ such that 
$e[u](T_*+\delta) \le \sqrt{\eps}$,
which contradicts the definition of $T_*$, and we conclude that $T_*=T^*$. 

In particular, we have $e[u](T^*) \le \sqrt{\eps}$.
This implies that \eqref{blowup} never occurs for small $\eps$. 
In other words, we must have $T^*=\infty$, 
that is, the solution $u$ exists globally for small data. 
This completes the proof of Theorem~\ref{Thm_SDGE}.

From this proof, we see that
\begin{equation}
\label{AE00}
e[u](\infty) \le \sqrt{\eps}
\end{equation}
holds for the global solution $u$ with small $\eps$, and
Proposition~\ref{Prop_Apriori} again yields
\begin{equation}
 \label{AE}
 e[u](\infty)\le M\eps.
\end{equation}

\begin{proof}[Proof of Proposition~\ref{Prop_Apriori}.] 
In what follows, we always suppose that $0\le t<T$,
and that $0<\eps\le 1$. Let $R$ be the constant satisfying \eqref{supp_0}. 
Recall that we also have \eqref{supp_t} for the solution $u$. 
The proof of Proposition~\ref{Prop_Apriori} will be divided 
into several steps.
\medskip

\noindent{\bf Step 1: 
Basic energy estimates.}

We set
$$
 E_l(t)
 =
 \frac{1}{2}\|\pa u(t)\|_l^2
 +
 \frac{1}{2} \int_0^t\left(\int_{\R^2}
  \frac{|u(\tau,y)|_{Z,l}^2}{\jb{\tau-|y|}^{1+\mu}}
 dy\right)d\tau.
$$
The goal of this step is  to establish the following  estimates: 
\begin{align}
 E_l(t)^{1/2}
 \le 
 C \eps \jb{t}^{C_* \sqrt{\eps} + 2l\nu}
 \label{est_energy}
\end{align}
for $l\in \{0,1, \ldots, 2k+1\}$, 
where the constant $C_*$ is to be fixed. 

Let $0\le l\le 2k+1$.
In the sequel, we will use the following conventions:
$$
  |\pa u|_{-1}=0, \quad |u|_{Z,-1}=0, \quad\|\pa u\|_{-1}=0, \quad 
  E_{-1}(t)=0.
$$
From \eqref{Comm01}, \eqref{Comm03} and Lemma~\ref{GhostEnergy}, we get
\begin{align}
 E_l(t)
 \le 
  C_{1,l}  
  \|\pa u(0)\|_l^2
  +
  C_{1,l}
  \int_0^t\left(\int_{\R^2}
    \left|F\bigl(\pa u(\tau,y)\bigr)\right|_l|\pa u(\tau,y)|_ldy
  \right) d\tau,   
 \label{Ene01}
\end{align}
where $C_{1,l}$ is a positive constant depending only on $l$.
   %
It follows from \eqref{est_before} that 
\begin{align}
|F(\pa u)-F^{\rm q}(\pa u)|_l
 \le & 
 C_{l}\left(|\pa u|^2 |\pa u|_l+|\pa u|_{[l/2]}^2 |\pa u|_{l-1}\right) 
 \nonumber\\
 \le & 
  C_l{\eps}
     \jb{t}^{-1}|\pa u|_l
     +
   C_l{\eps}
   \jb{t}^{2\nu-1}|\pa u|_{l-1}
 \label{CubicEst}
\end{align}
with a positive constant $C_{l}$ depending only on $l$.
   %
   %
   %
By Corollary~\ref{Cor_NullEst}, we also have 
\begin{align}
 |F^{\rm q}(\pa u)|_l
 \le & 
  C_l \sqrt{\eps}                         
  \jb{t}^{-1/2}                     
  \jb{t-r}^{\mu-1}                        
  \left( |u|_{Z,l} + \jb{t}^{\nu} |u|_{Z,l-1} \right)
  \nonumber\\
 &\ +
  C_l \sqrt{\eps} \jb{t}^{\mu-(3/2)}|\pa u|_l.
  \label{QuadraticEst}
\end{align}
Since $\mu-1\le -(1+\mu)/2$, we deduce from \eqref{CubicEst} and 
\eqref{QuadraticEst} that 
\begin{align*}
  |F (\pa u)|_l |\pa u|_l
  \le &
   |F^{\rm q} (\pa u)|_l |\pa u|_l+ |F(\pa u)-F^{\rm q}(\pa u)|_l |\pa u|_l \\
  \le &
  C_l \sqrt{\eps}
  \jb{t-r}^{-(1+\mu)/2}  
  \left( |u|_{Z,l} + \jb{t}^{\nu} |u|_{Z,l-1} \right)
  \cdot \jb{t}^{-1/2} |\pa u|_l\\
 &
+
  C_l \sqrt{\eps} \jb{t}^{-1}|\pa u|_l^2\\
  & +
  C_l \eps^{3/4} \jb{t}^{(4\nu-1)/2}|\pa u|_{l-1} 
  \cdot \eps^{1/4} \jb{t}^{-1/2}|\pa u|_l\\
 \le &
  C_l \sqrt{\eps} \frac{|u|_{Z,l}^2 }{\jb{t-r}^{1+\mu} }
  +
  C_l \sqrt{\eps} \jb{t}^{-1}|\pa u|_l^2\\
 &
 +
  C_l \sqrt{\eps} \jb{t}^{2\nu} \frac{|u|_{Z,l-1}^2 }{\jb{t-r}^{1+\mu} }
  +
  C_l \eps^{3/2} \jb{t}^{4\nu-1}|\pa u|_{l-1}^2. 
\end{align*}
   %
   %
   %
   %
By integrating with respect to 
$(t,x)$,
and choosing $\eps$ suitably small, 
we 
get  
\begin{align}
 &C_{1,l} \int_0^t\left(\int_{\R^2}
   \left|F\bigl(\pa u(\tau,y)\bigr)\right|_l|\pa u(\tau,y)|_l
  dy \right) d\tau 
 \nonumber\\
 & \quad \le
 \frac{1}{2} E_l(t) 
 +
 C_{2,l} \sqrt{\eps} \int_0^t (1+\tau)^{-1} \|\pa u(\tau)\|_{l}^2 d\tau
 +
 C_{2,l} \sqrt{\eps} (1+t)^{2\nu} E_{l-1}(t)
 \nonumber\\
&\qquad + 
 C_{2,l} \eps^{3/2} \int_0^t (1+\tau)^{4\nu-1} \|\pa u(\tau)\|_{l-1}^2 d\tau
\label{Ene02}
\end{align}
with a positive constant $C_{2,l}$ depending only on $l$.

Now we put $C_*= 2 \max_{0\le l\le 2k+1} C_{2, l}$.  
   %
   %
We are going to prove \eqref{est_energy} by induction on $l$. 
In the case of $l=0$, it follows from \eqref{Ene01} and \eqref{Ene02} that
\begin{align*}
 E_0(t) 
 &\le 
 C\eps^2 +C_* \sqrt{\eps} 
 \int_0^t (1+\tau)^{-1}\|\pa u(\tau)\|_{0}^2 d\tau \\
 &\le
 C\eps^2+2C_*\sqrt{\eps}\int_0^t (1+\tau)^{-1}E_0(\tau)d\tau,
\end{align*}
whence the Gronwall lemma implies 
$$
 E_0(t)\le C\eps^2 (1+t)^{2C_* \sqrt{\eps}}.
$$
This shows \eqref{est_energy} for $l=0$.
Next we assume that \eqref{est_energy} holds for some 
$l\in \{0,1, \ldots, 2k\}$. 
Then it follows from \eqref{Ene01} and \eqref{Ene02} that
\begin{align*}
 E_{l+1}(t)
 \le & 
 C\eps^2
 +
 C_* \sqrt{\eps}
  \int_{0}^{t} (1+\tau)^{-1}\|\pa u(\tau)\|_{l+1}^2 d\tau
 \\
 &+C_*\sqrt{\eps}
   (1+t)^{2\nu}E_l(t)
   +
   C_* \eps^{3/2} \int_0^t (1+\tau)^{4\nu-1} \|\pa u(\tau)\|_{l}^2d\tau
\\
 \le &
 C\eps^2 
 + 
 2C_* \sqrt{\eps} \int_{0}^{t} (1+\tau)^{-1} E_{l+1}(\tau) d\tau 
 +
 C \eps^{5/2} (1+t)^{2C_*\sqrt{\eps}+4l\nu+2\nu}\\
 &+
 C \eps^{7/2} 
  \int_{0}^{t} (1+\tau)^{2C_*\sqrt{\eps}+ 4(l+1)\nu-1}d\tau\\
 \le & 
 C\eps^2 
 + 
 2C_* \sqrt{\eps} \int_{0}^{t} (1+\tau)^{-1} E_{l+1}(\tau)d\tau
 + 
 C \eps^{5/2} (1+t)^{2C_*\sqrt{\eps}+ 4(l+1)\nu},
\end{align*}
which yields
\begin{align*}
  E_{l+1}(t)
 &\le 
 C\eps^2 (1+t)^{2C_* \sqrt{\eps}}
 + 
 C\eps^{5/2} (1+t)^{2C_* \sqrt{\eps} + 4(l+1)\nu}\\
 &\le 
 C\eps^2 (1+t)^{2C_* \sqrt{\eps} + 4(l+1)\nu}.
\end{align*}
This means that \eqref{est_energy} remains true when $l$ is replaced by $l+1$, 
and \eqref{est_energy} has been proved for all $l\in \{0,1,\ldots, 2k+1\}$. 
\medskip

\noindent{\bf Step 2: 
Rough pointwise estimates. 
}

From now on, we assume  that $\eps\le (\nu/C_*)^2$. 
Then, since we have $k\ge 4$, 
it follows from \eqref{est_energy} with $l=2k+1$ that 
$$
 E_{k+5}(t)^{1/2}\le E_{2k+1}(t)^{1/2} \le C \eps \jb{t}^{(4k+3)\nu}.
$$
Observing that $2(4k+3) \nu \le \mu-\nu$ and $[(k+5)/2]\le k$, we get
\begin{align*}
\left\| \bigl|F\bigl(\pa u(t)\bigr)-F^{\rm q}\bigl(\pa u(t)\bigr)\bigr|_{k+5} \right\|_{L^1} 
  &\le  C \bigl\| |\pa u(t)|_{[(k+5)/2]} \bigr\|_{L^{\infty}} 
         \|\pa u(t)\|_{k+5}^2 \\
  &\le  
  \left(C \eps^{1/2} \jb{t}^{\nu-(1/2)}\right)
  \left(C \eps^2 \jb{t}^{2(4k+3)\nu}\right) \\
  &\le C \eps^{5/2} \jb{t}^{\mu-(1/2)},
\end{align*}
which yields
\begin{align}
\int_0^t
 \frac{\left\|
       \bigl|F\bigl(\pa u(\tau)\bigr)-F^{\rm q}\bigl(\pa u(\tau)\bigr)\bigr|_{k+5} \right\|_{L^1}
      }
      {\jb{\tau}^{1/2}}
 d\tau
 \le &
  C\eps^{5/2} \int_0^t\jb{\tau}^{\mu-1}d\tau
\nonumber\\
\le & C\eps^{5/2}\jb{t}^\mu.
 \label{CubicEst02}
\end{align}
On the other hand, it follows from Corollary~\ref{Cor_NullEst} that 
\begin{align*}
|F^{\rm q}(\pa u)|_{k+5} \le & C\sqrt{\eps}
\left(\jb{t}^{\nu-(1/2)}\jb{t-r}^{(3\mu-1)/2}\right)\jb{t-r}^{-(\mu+1)/2}|u|_{Z,k+5}
\\
&+C \sqrt{\eps}\jb{t}^{\mu-(3/2)}|\pa u|_{k+5}.
\end{align*}
Recalling \eqref{supp_t}, 
we deduce from the Schwarz inequality that 
\begin{align}
& \int_0^t 
   \frac{\left\| \bigl|F^{\rm q}\bigl(\pa u(\tau)\bigr)\bigr|_{k+5} \right\|_{L^1}}
        {\jb{\tau}^{1/2}}
  d\tau 
  \nonumber\\
& \quad \le 
  C\sqrt{\eps}\left( \int_0^t 
     \jb{\tau}^{2\nu-2} 
     \bigl\|\jb{\tau-|\cdot|}^{(3\mu-1)/2}\bigr\|_{L^2(B_{\tau+R})}^2
   d\tau \right)^{1/2}
   E_{k+5}(t)^{1/2}
   \nonumber\\
& \qquad\, +
   C\sqrt{\eps}\int_0^t 
    \jb{\tau}^{\mu-2} \|1\|_{L^2(B_{\tau+R})}\|\pa u(\tau)\|_{k+5}
  d\tau
  \nonumber\\
& \quad \le 
   C\eps^{3/2}
   \left(
     \jb{t}^{(3\mu+2\nu)/2+(4k+3)\nu} + \jb{t}^{\mu+(4k+3)\nu}
   \right)
 \nonumber\\
& \quad \le 
  C\eps^{3/2}\jb{t}^{3\mu}.
\label{QuadraticEst02}
\end{align}
By \eqref{CubicEst02}, \eqref{QuadraticEst02} and Lemma~\ref{Lem_Hor}, 
we have
\begin{align*}
 \jb{t+|x|}^{1/2} |u(t,x)|_{k+4} 
 \le & 
 C_R\|u(0)\|_{k+6}
 +
 C\int_0^t 
 \frac{\bigl\| |F(\pa u(\tau))|_{k+5} \bigr\|_{L^1}}{\jb{\tau}^{1/2}}d\tau\\
 \le & 
 C \eps + C\eps^{3/2}  \jb{t}^{3\mu} \\
 \le &
 C\eps \jb{t+|x|}^{3\mu}, 
\end{align*} 
that is,
\begin{align}
 |u(t,x)|_{k+4} \le C \eps \jb{t+ |x|}^{3\mu-(1/2)}
 \label{rough_bound_u_0}
\end{align}
for $(t,x) \in [0,T)\times \R^2$. 
Since $[(k+3)/2]\le k$, 
we see from Lemmas~\ref{NullEst02} and \ref{Lem_Lind} that
\begin{align*}
|F^{\rm q}(\pa u)|_{k+3}
 \le & 
  C \jb{t+r}^{-1}\left(|\pa u|_{k}   
  |\Gamma u|_{k+3}
  +
  |\Gamma u|_{k}                     
  |\pa u|_{k+3}\right)
 \\
 \le & 
  C \jb{t+r}^{-1}\jb{t-r}^{-1}|u|_{k+1}    
    |u|_{k+4}\nonumber\\
 \le & 
  C\eps^{3/2}\jb{t+r}^{4\mu-2}\jb{t-r}^{-1}\\
 \le &
  C\eps^{3/2}\jb{t}^{5\mu-2}\jb{t-r}^{-1-\mu}.
\end{align*}
Hence we get
\begin{align}
 \int_0^t 
  \frac{\left\|\bigl|F^{\rm q}\bigl(\pa u(\tau)\bigr)\bigr|_{k+3}\right\|_{L^1}}
       {\jb{\tau}^{1/2}}
 d\tau
 \le & 
  C\eps^{3/2}
  \int_0^t 
   \jb{\tau}^{5\mu-(5/2)}\|\jb{\tau-|\cdot|}^{-1-\mu}\|_{L^1(B_{\tau+R})}
  d\tau
 \nonumber\\
 \le & 
  C\eps^{3/2}\int_0^t \jb{\tau}^{5\mu-(3/2)} d\tau
 \nonumber\\
 \le &
 C\eps^{3/2},
 \label{QuadraticEst03}
\end{align}
because $5\mu-(3/2)<-1$.
Now it follows from \eqref{CubicEst02}, \eqref{QuadraticEst03} and 
Lemma~\ref{Lem_Hor} that
\begin{align*}
 \jb{t+|x|}^{1/2} |u(t,x)|_{k+2} 
 \le & 
 C_R\|u(0)\|_{k+4}
 +
 C\int_0^t 
 \frac{\bigl\| |F(\pa u(\tau))|_{k+3}\bigr\|_{L^1}}{\jb{\tau}^{1/2}}d\tau\\
 \le & 
 C \eps + C\eps^{3/2}  \jb{t}^{\mu}
 \nonumber\\
 \le &
 C\eps \jb{t+|x|}^{\mu}. 
\end{align*}
In other words, we obtain
\begin{equation}
|u(t,x)|_{k+2} \le C \eps \jb{t+|x|}^{\mu-(1/2)}
\label{rough_bound_u}
\end{equation}
for $(t,x)\in [0,T)\times \R^2$.
By Lemma~\ref{Lem_Lind}, we also have 
\begin{align}
 |\pa u(t,x)|_{k+1} \le C \eps \jb{t+|x|}^{\mu-(1/2)} \jb{t-|x|}^{-1}
\label{rough_bound_pa_u}
\end{align}
for $(t,x) \in [0,T)\times \R^2$. 
\medskip

 \noindent{\bf Step 3: 
 Estimates for $|\pa u(t,x)|_{k}$ away from the light cone.}

Now we put 
$\Lambda_{T,R}^{\rm c}:=\bigl([0,T)\times \R^2\bigr)\setminus \Lambda_{T,R}$.
In the case of $t/2<1$ or $|x|< t/2$, we see that 
$$
\jb{t-|x|}\le \jb{t+|x|}\le C\jb{t-|x|}.
$$
On the other hand, it follows from \eqref{supp_t} that $u(t,x)=0$ if $|x|>t+R$.
Hence \eqref{rough_bound_pa_u} implies
\begin{align}
 & \sup_{(t,x)\in \Lambda_{T,R}^{\rm c}} 
 \jb{t+|x|}^{1/2} \jb{t-|x|}^{1-\mu}|\pa u(t,x)|_{k}\nonumber\\
 & \qquad \le 
  \sup_{(t,x)\in\Lambda_{T,R}^{\rm c}}
  \jb{t+|x|}^{(3/2)-\mu}|\pa u(t,x)|_{k}
\le C\eps.
\label{est_after_01}
\end{align}
\medskip

\noindent{\bf Step 4:  
Estimates for $|\pa u(t,x)|$ near the light cone.}

Let $(t,x) \in \Lambda_{T,R}$.
We may assume $T\ge 2$, because $\Lambda_{T,R}$ is empty for $T<2$.
Remember that $t^{-1}$, $r^{-1}$, $\jb{t}^{-1}$ and $\jb{t+r}^{-1}$ are 
equivalent to each other in $\Lambda_{T,R}$. 
We define $U$, $U^{(\alpha)}$, $H$, $H_\alpha$ and $|\,\cdot\,|_{\sharp, s}$ 
as in the previous section 
(see \eqref{U}, \eqref{DefRemainder}, \eqref{DefUAlpha},
\eqref{DefHAlpha} and \eqref{norm_sharp}). 
We see from  \eqref{rough_bound_u} and \eqref{rough_bound_pa_u} that 
\begin{align}
 |u(t,x)|_{\sharp,k}\le C\eps t^{\mu-1/2}\jb{t-|x|}^{-1}.
 \label{est_u_sharp01}
\end{align}
By \eqref{Comm03}, 
\eqref{rough_bound_u} 
and Lemma~\ref{Lem_Aux01},  we have
\begin{align}
t^{1/2}|\pa u(t,x)|_l
 \le & 
 C \sum_{|\alpha|\le l} \bigl|\,|x|^{1/2} \pa \Gamma^\alpha u(t,x) \bigr| 
 \nonumber\\
 \le & 
 C\sum_{|\alpha|\le l} |U^{(\alpha)}(t,x)| + C\eps t^{\mu-1}
 \label{est_pa_u_02}
\end{align}
for $l\le k$. 
Also, it follows from \eqref{est_before}, \eqref{rough_bound_u}, \eqref{est_u_sharp01} and 
Lemma~\ref{Lem_Remainder} that 
\begin{align}
|H(t,x)|
 &\le 
 C\eps^2 t^{2\mu-(3/2)}\jb{t-|x|}^{-1} + C \eps t^{\mu-2}
 \nonumber\\
 &\le 
 C\eps t^{2\mu-(3/2)}\jb{t-|x|}^{-\mu-(1/2)}.
 \label{est_H}
\end{align}
Next we put 
$$
\Sigma=\left\{(t,x) \in 
[0,\infty)\times \R^2;\, |x|\ge \frac{t}{2}=1\ \text{or}\ |x|=\frac{t}{2}\ge 1 \right\}
$$
\scalebox{0.9}[1]{and we define $t_{0, \sigma}=\max\{2, -2\sigma\}$.  
What is important here is that the half line 
$\left\{(t, (t+\sigma)\omega);\, 
t\ge 0
\right\}$} \\
meets $\Sigma$ at the point 
$\left(t_{0, \sigma}, (t_{0,\sigma}+\sigma)\omega\right)$ for each 
fixed $(\sigma,\omega) \in \R 
\times \Sph^1$, so that
$$
\Lambda_{T,R}=\bigcup_{(\sigma, \omega)\in (-T/2, R]\times \Sph^1}\left\{
\left(t, (t+\sigma)\omega\right);\, t_{0,\sigma} \le t<T
\right\}.
$$
We also remark that 
\begin{align}
 C^{-1}\jb{\sigma} \le t_{0,\sigma} \le C \jb{\sigma}, \quad \sigma\le R
 \label{t_zero}
\end{align}
with a positive constant $C$ depending only on $R$.
When $(t,x)\in \Sigma$, 
we have $t^{\mu}\le C\jb{t-|x|}^{\mu}$. 
So it follows from \eqref{Comm03}, \eqref{est_u_sharp01} and 
Lemma~\ref{Lem_Aux01} that 
\begin{align}
 \sum_{|\alpha|\le k}|U^{(\alpha)}(t,x)|
 \le & 
 C t^{1/2} |u(t,x)|_{\sharp, k} \nonumber\\
 \le &
 C\eps \jb{t-|x|}^{\mu-1},
\quad (t,x)\in \Sigma \cap \Lambda_{T,R}.
\label{est_U_alpha}
\end{align}

Let ${\mathcal A}$ be the matrix in the condition (Ag).
Since ${\mathcal A}$ is positive-definite and continuous on $\Sph^1$,
we can find a positive constant $M_0$ such that 
\begin{equation}
 \label{Bound_A}
 M_0^{-1} |Y|^2 
 \le 
 Y \cdot {\mathcal A}(\omega)Y 
 \le 
 M_0 |Y|^2, 
 \quad 
 (\omega, Y) \in \Sph^1\times \R^N.
\end{equation}
Now we define
\begin{align}
V(t; \sigma,\omega)
 =
 U\bigl(t, (t+\sigma)\omega\bigr)
\label{profile_v}
\end{align}
for $0\le t< T$ and 
$(\sigma,\omega) \in \R\times \Sph^1$. In what follows, we fix $(\sigma, \omega)\in 
(-
T/2, R]\times \Sph^1$ and write $V(t)$ for $V(t;\sigma, \omega)$. 
Then, since the profile equation \eqref{ode_0} is rewritten as 
\begin{align}
 \frac{\pa V}{\pa t}(t)=(\pa_+U)\bigl(t, (t+\sigma)\omega\bigr)
 = 
 -\frac{1}{2t}F^{\rm c, red}\bigl(\omega, V(t)\bigr)+H\bigl(t, (t+\sigma)\omega\bigr) 
 \label{ode_V}
\end{align}
for $t_{0,\sigma} < t< T$, it follows from the condition {\rm (Ag)} that 
\begin{align}
 \frac{\pa}{\pa t}\bigl( V(t) \cdot {\mathcal A}(\omega)V(t) \bigr)
 &=
 2 V(t) \cdot {\mathcal A}(\omega)\frac{\pa V}{\pa t}(t) 
 \nonumber\\
 &=
 2V(t) \cdot {\mathcal A}(\omega)
 \left(
   - \frac{1}{2t}F^{\rm c, red}\bigl(\omega, V(t)\bigr)
   + H\bigl(t, (t+\sigma)\omega\bigr)
 \right)\nonumber\\ 
 &\le
  2 V(t) \cdot {\mathcal A}(\omega) H\bigl(t,(t+\sigma)\omega\bigr)
    \nonumber\\
 &\le 
  C\sqrt{V(t) \cdot {\mathcal A}(\omega)V(t)}\left|H(t,\bigl(t+\sigma)\omega\bigr)\right|
\label{star02}
\end{align}
for $t_{0,\sigma} < t< T$.
We also note that \eqref{est_U_alpha} for $k=0$ can be interpreted as
\begin{align}
 |V(t_{0,\sigma})|
 =
 \left|U\bigl(t_{0,\sigma}, (t_{0,\sigma}+\sigma)\omega\bigr)\right|
 \le 
 C\eps \jb{\sigma}^{\mu-1}.
 \label{est_V_ini}
\end{align}
We deduce from \eqref{est_H}, \eqref{t_zero}, \eqref{Bound_A}, \eqref{star02} and
\eqref{est_V_ini} that
\begin{align}
 |V(t)| & \le \sqrt{M_0} \sqrt{V(t) \cdot {\mathcal A}(\omega)V(t)} 
 \nonumber\\
 &\le C\left(
 \sqrt{V(t_{0,\sigma}) \cdot {\mathcal A}(\omega)V(t_{0,\sigma})} 
 +
 \int_{t_{0,\sigma}}^{t} \left|H\bigl(\tau,(\tau + \sigma)\omega\bigr)
\right|d\tau\right)
 \nonumber\\
 &\le 
 C\eps \jb{\sigma}^{\mu-1}
 +
 C\eps\jb{\sigma}^{-\mu-(1/2)}
 \int_{t_{0,\sigma}}^{t} {\tau}^{2\mu-(3/2)} d\tau
 \nonumber\\
 &\le 
 C\eps \jb{\sigma}^{\mu-1}
 \bigl\{ 1 + (\jb{\sigma}/t_{0,\sigma})^{(1/2)-2\mu} \bigr\}
 \nonumber\\
 &\le 
 C\eps \jb{\sigma}^{\mu-1}
\label{est_V}
\end{align}
for $t\ge t_{0,\sigma}$, 
where $C$ is  independent of $\eps$, $\sigma$ and $\omega$. 
\eqref{est_V} implies 
$$
 |U(t,x)|=|V(t; |x|-t,x/|x|)|\le C\eps\jb{t-|x|}^{\mu-1},
\quad (t,x)\in \Lambda_{T,R}.
$$
Finally, in view of \eqref{est_pa_u_02} with $l=0$, we obtain
\begin{align}
  \sup_{(t,x)\in \Lambda_{T,R}} 
  \jb{t+|x|}^{1/2}\jb{t-|x|}^{1-\mu} |\pa u(t,x)|
  \le C\eps.
  \label{est_after_02}
\end{align}
\medskip

\noindent{\bf Step 5: 
Estimates for $|\pa u(t,x)|_{k}$ near the light cone.}

For a nonnegative integer $s$, we set 
$$
{\mathcal U}^{(s)}(t,x):=\sum_{|\beta|\le s}|U^{(\beta)}(t,x)|.
$$
Let $1\le |\alpha|\le k$ and $(t,x)\in\Lambda_{T,R}$. 
By \eqref{est_pa_u_02} we get
\begin{align}
 |\pa u(t,x)|_{|\alpha|-1}
 \le 
 C    
 t^{-1/2}{\mathcal U}^{(|\alpha|-1)}(t,x)
 +
 C \eps t^{\mu-3/2}.   
 \label{est_pa_u_03}
\end{align}
It follows from \eqref{rough_bound_u}, \eqref{rough_bound_pa_u}, \eqref{est_u_sharp01}, 
\eqref{est_pa_u_03} and Lemma~\ref{Lem_Remainder} that
\begin{align}
|H_\alpha(t,x)|
 \le& 
 C   
  (1+\eps^2t^{2\mu}+\eps t^{\mu-(1/2)})
   \eps^2 t^{2\mu-(3/2)}\jb{t-|x|}^{-1}
  \nonumber\\
  & + 
  C \eps t^{\mu-2}
  +
  C \eps^3t^{3\mu-4}
  +
  C t^{-1} \bigl({\mathcal U}^{(|\alpha|-1)}(t,x)\bigr)^3
 \nonumber\\
 \le& 
 C\eps t^{
  4\mu-(3/2)}\jb{t-|x|}^{-
  3\mu-(1/2)}
 +
 Ct^{-1}\bigl({\mathcal U}^{(|\alpha|-1)}(t,x)\bigr)^3.
 \label{est_H_alpha}
\end{align} 
We put
$$
 V^{(\alpha)}(t; \sigma, \omega)=U^{(\alpha)}\bigl(t, (t+\sigma)\omega\bigr)
$$
for $0 \le t< T$ and $(\sigma,\omega) \in (-\infty, R]\times \Sph^1$.
We fix $(\sigma, \omega)\in (-
T/2, R]\times \Sph^1$ and write
$V^{(\alpha)}(t)$ for $V^{(\alpha)}(t; \sigma, \omega)$.
Then \eqref{ode_alpha} is rewritten as 
\begin{align*}
\frac{\pa V^{(\alpha)}}{\pa t}(t)
= -\frac{1}{2t} G_\alpha\left(\omega, V(t), V^{(\alpha)}(t)\right)
+H_\alpha\bigl(t, (t+\sigma) \omega\bigr)
\end{align*}
for $t_{0,\sigma} < t< T$. Hence by \eqref{est_V} and \eqref{est_H_alpha} 
we obtain
\begin{align*}
 \frac{\pa}{\pa t}\bigl|V^{(\alpha)}(t)\bigr|^2
 \le& 
 \frac{C}{t}  |V(t)|^2 \bigl|V^{(\alpha)}(t)\bigr|^2
 +2\left|H_\alpha\bigl(t, (t+\sigma) \omega\bigr)\right|\, \bigl|V^{(\alpha)}(t)\bigr|\\
 \le& 
 \frac{2C^* \eps^2}{t} \bigl|V^{(\alpha)}(t)\bigr|^2\\
 &+
 C\left(
  \eps t^{
  4\mu-(3/2)}\jb{\sigma}^{
  -3\mu-(1/2)}
  + 
  t^{-1}\bigl({\mathcal V}^{(|\alpha|-1)}(t)\bigr)^3
 \right)\, \bigl|V^{(\alpha)}(t)\bigr|,
\end{align*}
where 
$$
 {\mathcal V}^{(s)}(t)\bigl(={\mathcal V}^{(s)}(t;\sigma,\omega)\bigr)
 :=
 \sum_{|\beta|\le s} \bigl|V^{(\beta)}(t;\sigma, \omega)\bigr|,
$$
and $C^*$ is a positive constant independent of $\alpha$. 
Therefore it follows from \eqref{t_zero} and \eqref{est_U_alpha} that
\begin{align*}
 t^{-C^*\eps^2}|V^{(\alpha)}(t)|
 \le& 
 t_{0,\sigma}^{-C^*\eps^2}
 \bigl|V^{(\alpha)}(t_{0,\sigma})\bigr|\\
& +
 C\eps \jb{\sigma}^{
 -3\mu-(1/2)} \int_{t_{0,\sigma}}^t \tau^{-C^*\eps^2+
 4\mu-(3/2)}d\tau
 \\
 &{}+
 C\int_{t_{0,\sigma}}^t \tau^{-C^*\eps^2 -1}
 \bigl({\mathcal V}^{(|\alpha|-1)}(\tau)\bigr)^3 d\tau
 \\
 \le & 
 C\eps\jb{\sigma}^{\mu-1}
 +
 C\int_{2}^t \tau^{-C^*\eps^2-1}
 \bigl({\mathcal V}^{(|\alpha|-1)}(\tau)\bigr)^3 d\tau.
\end{align*}
By this inequality for $1\le |\alpha|\le l$ and \eqref{est_V},
we have
$$
 t^{-C^*\eps^2}{\mathcal V}^{(l)}(t)
 \le 
  C\eps\jb{\sigma}^{\mu-1}+C\int_{2}^t \tau^{-C^*\eps^2-1}
  \bigl({\mathcal V}^{(l-1)}(\tau)\bigr)^3 d\tau
$$
for $l \in \{1, \ldots, k\}$. 
Using this inequality, we can show inductively that
\begin{align}
 {\mathcal V}^{(l)}(t)
 \le 
 C \eps \jb{\sigma}^{\mu-1} t^{3^{l-1} C^* \eps^2} 
\label{est_V_l}
\end{align}
for $t_{0,\sigma}\le t < T$ and $l \in \{1, \ldots, k\}$. 
Indeed, we already know that 
$$
 {\mathcal V}^{(0)}(t)= |V(t)|
\le C\eps\jb{\sigma}^{\mu-1} 
$$
by \eqref{est_V}. Hence we have 
\begin{align*}
 t^{-C^*\eps^2}{\mathcal V}^{(1)}(t)
 \le  
 C\eps\jb{\sigma}^{\mu-1}
 +
 C\eps^3\jb{\sigma}^{3\mu-3}
 \int_{2}^\infty \tau^{-C^*\eps^2-1} d\tau
 \le 
 C\eps\jb{\sigma}^{\mu-1},
\end{align*}
which implies \eqref{est_V_l} for $l=1$. 
Next we suppose that \eqref{est_V_l} is true for some 
$l \in \{1, \ldots, k-1\}$. Then we have
\begin{align*}
 t^{-C^*\eps^2}{\mathcal V}^{(l+1)}(t)
 \le& 
  C\eps\jb{\sigma}^{\mu-1}+C \eps^3\jb{\sigma}^{3\mu-3}
  \int_{2}^t \tau^{(3^{l}-1)C^*\eps^2-1} d\tau\\
 \le& 
  C\eps \jb{\sigma}^{\mu-1} t^{(3^{l}-1)C^* \eps^2},
\end{align*}
which yields \eqref{est_V_l} with $l$ replaced by $l+1$. 
Hence \eqref{est_V_l} for $l\in \{1,\ldots,k\}$ 
has been proved.

By \eqref{est_pa_u_02} and \eqref{est_V_l} with $l=k$, we have 
\begin{align*}
|\pa u(t,x)|_k
\le 
C\eps \jb{t+|x|}^{-1/2+3^{k-1}C^*\eps^2}\jb{t-|x|}^{-1+\mu},
\quad (t,x)\in \Lambda_{T,R}.
\end{align*}
Finally we take $\eps$ so small that  $3^{k-1}  C^* \eps^2 \le \nu$. 
Then we obtain
\begin{align}
 \sup_{(t,x)\in \Lambda_{T, R}} 
 \jb{t+|x|}^{1/2-\nu}\jb{t-|x|}^{1-\mu}|\pa u(t,x)|_k
  \le C\eps.
 \label{est_after_03}
\end{align}
\medskip
 
\noindent{\bf The final step.}

By \eqref{rough_bound_u}, \eqref{est_after_01}, \eqref{est_after_02} and \eqref{est_after_03}, 
we can find two positive constants 
$\eps_2$ and $M$ such that \eqref{est_after} holds for $0<\eps\le \eps_2$. 
This completes the proof of Proposition~\ref{Prop_Apriori}.
\end{proof}

\section{Proof of the energy decay}\label{PED}

Before we proceed to the proof of Theorem~\ref{Thm_EnergyDecay}, 
we introduce a useful lemma. 

\begin{lem}  \label{Lem_Mats}
Let $C_0>0$, $C_1\ge 0$, $p>1$, $q>1$ and $t_0\ge 2$. Suppose that 
$\Phi(t)$ satisfies 
$$
 \frac{d \Phi}{dt}(t) \le \frac{-C_0}{t} |\Phi(t)|^p +\frac{C_1}{t^q}
$$ 
for $t\ge t_0$. Then we have 
$$
 \Phi(t) \le \frac{C_2}{(\log t)^{p^*-1}}
$$
for $t\ge t_0$, where $p^*$ is the H\"older conjugate of $p$ 
{\rm(}i.e., $1/p+1/p^*=1${\rm)}, and 
$$
 C_2
 =\frac{1}{\log 2}\left(
 (\log t_0)^{p^*}\Phi(t_0) 
 + 
 C_1\int_2^{\infty} \frac{(\log \tau)^{p^*}}{\tau^{q}}d\tau
 \right)+ 
 \left(\frac{p^*}{C_0 p} \right)^{p^*-1}.
$$
\end{lem}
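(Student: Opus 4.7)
The plan is to introduce the weighted quantity $Z(t) := (\log t)^{p^*} \Phi(t)$, which encodes the expected decay rate, and to derive a clean differential inequality for $Z$ by absorbing the unfavorable growth terms into the available dissipation using Young's inequality with a precisely chosen parameter. The choice of exponent $p^*$ in the multiplier is dictated by the homogeneity relations $p\,p^{*} = p + p^*$ and $p(p^*-1) = p^*$, which make the weights balance exactly.

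First I would differentiate $Z$ and use the hypothesis to obtain
\[
Z'(t) \le \frac{p^*(\log t)^{p^*-1}}{t}\Phi(t) - \frac{C_0(\log t)^{p^*}}{t}|\Phi(t)|^p + \frac{C_1(\log t)^{p^*}}{t^q}.
\]
Since $\Phi \le |\Phi|$, I replace $\Phi$ by $|\Phi|$ in the first term on the right, so that the sign of $\Phi$ no longer matters; this point is minor but needs to be stated cleanly. The key step is then to apply the scaled Young inequality
\[
xy \le \frac{x^p}{\lambda p} + \frac{\lambda^{p^*-1} y^{p^*}}{p^*}
\]
with $x = (\log t)^{p^*/p}|\Phi(t)|$, $y = p^*/t$, and $\lambda = t/(C_0 p)$. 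A short computation using $p^*/p = p^*-1$ and $p(p^*-1) = p^*$ shows that the first Young term is exactly $C_0(\log t)^{p^*}|\Phi|^p/t$ and the second term is $(p^*/(C_0 p))^{p^*-1}/t$; these cancel the dissipative term and leave
\[
Z'(t) \le \frac{(p^*/(C_0 p))^{p^*-1}}{t} + \frac{C_1(\log t)^{p^*}}{t^q}.
\]

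From here the argument is a direct integration on $[t_0,t]$. Using $t_0 \ge 2$ to extend the $t^{-q}$ integral up to $\int_2^{\infty}(\log\tau)^{p^*}\tau^{-q}d\tau$ (which is finite because $q>1$), I obtain
\[
(\log t)^{p^*}\Phi(t) \le (\log t_0)^{p^*}\Phi(t_0) + (p^*/(C_0 p))^{p^*-1}\log t + C_1 \int_2^{\infty}\frac{(\log\tau)^{p^*}}{\tau^q}d\tau.
\]
Finally, dividing by $(\log t)^{p^*-1}$ and bounding the leftover factor $1/\log t$ by $1/\log 2$ (using $t \ge t_0 \ge 2$) converts the constant pieces into the factor $1/\log 2$ appearing in the definition of $C_2$, while the middle term contributes precisely $(p^*/(C_0 p))^{p^*-1}$. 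This reproduces the stated $C_2$.

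The conceptual obstacle is choosing the correct multiplier $(\log t)^{p^*}$ and the correct scaling $\lambda = t/(C_0 p)$ in Young's inequality; once these are in place the calculation is essentially algebraic, with the exponent identities $p(p^*-1)=p^*$ and $p^*/p = p^*-1$ producing the exact cancellations. No genuinely hard step appears, but one must be careful to keep $\Phi$ (which may be negative) distinct from $|\Phi|$, using $\Phi \le |\Phi|$ to pass to a quantity where the Young bound is meaningful.
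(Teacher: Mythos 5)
Your proposal is correct and follows essentially the same route as the paper: multiply by the weight $(\log t)^{p^*}$, absorb the term $p^*(\log t)^{p^*-1}\Phi/t$ into the dissipation $C_0(\log t)^{p^*}|\Phi|^p/t$ via Young's inequality with the same scaling (your $\lambda = t/(C_0p)$ is the paper's $\kappa = C_0p/p^*$), then integrate and bound the remaining integral by $\int_2^\infty(\log\tau)^{p^*}\tau^{-q}d\tau$. The exponent identities and the resulting constant $C_2$ match the paper's computation exactly.
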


\begin{rem}
Special cases of this lemma  have been used in 
Section 4 of \cite{KLS} and Section 5 of \cite{KimS} less explicitly.
\end{rem}

\begin{proof}
It follows from the Young inequality that 
\begin{align*}
 |\Phi(t)| 
 &=
  \Bigl( \kappa  (\log t) |\Phi(t)|^p \Bigr)^{\frac{1}{p}}
  \cdot
  \left( \frac{1}{(\kappa \log t)^{p^*-1}} \right)^{\frac{1}{p^*}}\\
 &\le
 \frac{\kappa}{p}(\log t) |\Phi(t)|^p
 +
 \frac{1}{p^* (\kappa \log t)^{{p^*-1}}}
\end{align*}
for $\kappa>0$. By choosing $\kappa=C_0 p/p^*$, we have 
$$
 p^* \frac{(\log t)^{p^*-1}}{t} \Phi(t)
 \le 
 (\log t)^{p^*} \frac{C_0}{t} |\Phi(t)|^p
 +
 \left(\frac{p^*}{C_0 p} \right)^{p^*-1} \frac{1}{t},
$$
whence
\begin{align*}
 \frac{d}{dt} \Bigl( (\log t)^{p^*} \Phi(t) \Bigr)
 &=
   (\log t)^{p^*} \frac{d \Phi}{dt}(t) 
   + 
   p^* \frac{(\log t)^{p^* -1}}{t} \Phi(t)\\
 &\le
   (\log t)^{p^*} 
   \left\{
     \frac{d \Phi}{dt}(t) +  \frac{C_0}{t} |\Phi(t)|^p
   \right\}
   +   
   \left(\frac{p^*}{C_0 p} \right)^{p^*-1} \frac{1}{t}\\
 &\le 
  C_1 \frac{(\log t)^{p^*}}{t^q} 
  +   
   \left(\frac{p^*}{C_0 p} \right)^{p^*-1} \frac{1}{t}.
\end{align*}
Integration with respect to $t$ implies 
\begin{align*}
 (\log t)^{p^*} \Phi(t) 
 &\le 
  (\log t_0)^{p^*} \Phi(t_0)
  + 
  C_1 \int_{t_0}^{t} \frac{(\log \tau)^{p^*}}{\tau^q} d\tau
  +   
  \left(\frac{p^*}{C_0 p} \right)^{p^*-1} \log \left( \frac{t}{t_0} \right) \\
 &\le 
  C_2 \log t,
\end{align*}
from which we deduce the desired inequality. 
\end{proof}

Now we are ready to finish the proof of Theorem~\ref{Thm_EnergyDecay}. 
Note that all the estimates in the proof of Proposition~\ref{Prop_Apriori} are
valid with $T=\infty$, because of \eqref{AE00}.
Let the assumptions of Theorem~\ref{Thm_EnergyDecay} be fulfilled.
The conditions (Ag) and \eqref{AgD} imply
$$
 \min_{|Y|=1,\, \omega\in \Sph^1} 
 Y\cdot {\mathcal A}(\omega)F^{\rm c, red}(\omega, Y)>0.
$$
Hence, in view of \eqref{Bound_A}, we can choose $C_0>0$ such that
$$
 Y \cdot \mathcal{A}(\omega) F^{\rm{c, red}}(\omega,Y) \ge 
 C_0 (Y \cdot \mathcal{A}(\omega) Y)^2, 
 \quad
 (\omega,Y)  \in \Sph^1 \times \R^N. 
$$
For $(t,x) \in \Lambda_{T,R}$, we fix $\sigma=|x|-t$, $\omega=x/|x|$ 
and set $\Phi(t)=V(t) \cdot \mathcal{A}(\omega) V(t)$ 
with $V(t)=V(t;\sigma,\omega)$ defined by \eqref{profile_v}. 
By \eqref{est_H}, \eqref{ode_V} and \eqref{est_V}, we get
$$
 \frac{d \Phi}{dt}(t)=2V(t)\cdot{\mathcal A}(\omega)\frac{dV}{dt}(t)
 \le 
 \frac{-C_0}{t}(\Phi(t))^2 
 + 
 \frac{C' \eps^2 \jb{\sigma}^{-3/2}}{t^{(3/2)-2\mu}}
$$
for $t\ge t_{0,\sigma}$ (cf.~\eqref{star02}), where $C'$ 
is a positive constant independent of $t$, $\sigma$, $\omega$ and $\eps$.
Therefore we can apply Lemma~\ref{Lem_Mats} to obtain 
\begin{equation}
 \Phi(t) \le \frac{C_{\sigma, \omega}}{\log t}, \quad t\ge t_{0,\sigma}
\label{Est_Phi}
\end{equation}
with
$$
C_{\sigma,\omega}=
 \frac{1}{\log 2}
  \left(
    (\log t_{0,\sigma})^2 \Phi(t_{0,\sigma})
    +
     C' \eps^2 \jb{\sigma}^{-3/2}  
    \int_{2}^\infty \frac{(\log \tau)^2}{\tau^{(3/2)-2\mu}}d\tau
   \right)
  +
  \frac{1}{C_0}.
$$
By \eqref{t_zero} and \eqref{est_V_ini}, we can find a positive constant 
$C_{3}$, not depending on $\eps$, $\sigma$, $\omega$ and $T$,  
such that $C_{\sigma,\omega}\le C_3$ for all 
$(\sigma,\omega)\in (-
T/2,R]\times \Sph^1$. 
Hence \eqref{Bound_A} and \eqref{Est_Phi} lead to 
$$
  |V(t;\sigma,\omega)|
 \le 
 \sqrt{M_0 \Phi(t)}
 \le 
 \frac{C}{\sqrt{\log t}}, \quad t\ge t_{0,\sigma},
$$
which, together with \eqref{est_after_01} and \eqref{est_pa_u_02}, yields
\begin{equation}
|\pa u(t,x)|\le C t^{-1/2}(\log t)^{-1/2},\quad (t,x)\in [2,\infty)\times \R^2.
\label{est_u_log}
\end{equation}
By \eqref{AE} and \eqref{est_u_log}, we have
\begin{equation}
\label{DecayDamping02}
 |\pa u(t,x)|
 \le 
 C t^{-1/2} \min \bigl\{ (\log t)^{-1/2}, \eps(R+|t-|x||)^{\mu-1} \bigr\}
\end{equation}
for $(t, x)\in [2,\infty)\times \R^2$.

  %
Given $\delta>0$, we put $\delta_0=\min\{\delta,1/37\}$ and
$\rho(t;\eps)=(\eps^2 \log t)^{(1/2)+2\delta_0}$.
We choose $\mu=4\delta_0/(1+4\delta_0)$ in the definition of 
$e[u](T)$. 
Let $t\ge 2$.
For small $\eps>0$ we have $0<\rho(t;\eps)<t$, and we get 
$0<t+R-\rho(t;\eps)\le t+R$.
Then it follows from \eqref{supp_t} that
$$
\|u(t)\|_E^2=\frac{1}{2}\int_{|x|\le t+R} |\pa u(t,x)|^2 dx=I_1+I_2,
$$
where
\begin{align*}
I_1
=&
\frac{1}{2}\int_{|x|\le t+R-\rho(t;\eps)} |\pa u(t,x)|^2 dx,\\
I_2
=&
\frac{1}{2}\int_{t+R-\rho(t;\eps)\le |x|\le t+R}|\pa u(t,x)|^2 dx.
\end{align*}
Note that we have $t^{-1}r\le t^{-1}(t+R)\le 1+R/2$ for $0\le r\le t+R$,
and $0<\rho(t;\eps)\le R+t-r\le R+|t-r|$ for $0\le r\le t+R-\rho(t;\eps)$.
By using the polar coordinates,
we deduce from \eqref{DecayDamping02} that
\begin{align*}
 I_1 
 \le & C\eps^2\int_0^{t+R-\rho(t;\eps)} t^{-1}(R+|t-r|)^{2\mu-2}r dr \\
 \le & 
  C\eps^2 \int_0^{t+R-\rho(t;\eps)} (R+t-r)^{2\mu-2}dr \\
 \le & 
  C\eps^2\rho(t;\eps)^{2\mu-1}
 =
 \frac{C\eps^2}{(\eps^2 \log t)^{
(1/2)-2\delta_0}},
\end{align*}
as well as 
\begin{align*}
I_2 
\le 
C \int_{t+R-\rho(t;\eps)}^{t+R} t^{-1}(\log t)^{-1} r dr
\le 
 C(\log t)^{-1} \rho(t;\eps)
 =
 \frac{C\eps^2}{(\eps^2 \log t)^{
(1/2)-2\delta_0}}.
\end{align*}
On the other hand, we get
\begin{align*}
 \|u(t)\|_E^2
  \le
  C \eps^2 \int_0^{t+R} t^{-1} \bigl(R+|t-r| \bigr)^{2\mu -2} r dr
  \le 
  C\eps^2 \int_{-\infty}^{\infty} \jb{\sigma}^{2\mu-2} d \sigma
  \le 
  C\eps^2. 
 \end{align*}
Summing up, we have  
$$
 \|u(t)\|_E^{2}
  \le 
  \frac{C \eps^{2}}
  { \bigl( 1+ \eps^2 \log t \bigr)^{
(1/2)-2\delta_0}}
 \le 
 \frac{C \eps^{2}}
  { \bigl( 1+ \eps^2 \log t \bigr)^{
(1/2)-2\delta}}
$$
for $t\ge 2$, which completes the proof of Theorem~\ref{Thm_EnergyDecay}. 
\qed

\appendix \section{Proof of Lemma~\ref{GhostEnergy}}

First we put 
$$
 \eta (t,x)=\int_{-\infty}^{|x|-t} \frac{dz}{\jb{z}^{\rho}}, 
\quad (t,x) \in \R\times \R^2.
$$
Then we can easily check that 
\begin{align}
 1
 \le 
 e^{\eta(t,x)} 
 \le 
 \exp\left(\int_{\R} \frac{dz}{\jb{z}^{\rho}}\right) 
 <
 \infty
 \label{eq_ghost}
\end{align}
and that 
$$
 (\pa_t \eta) |\pa \psi|^2 -2 (\nabla_x \eta) \cdot (\nabla_x \psi)\pa_t \psi
 =\frac{-|Z \psi|^2}{\jb{t-|x|}^\rho}.
$$
Next, as in the usual energy integral method, we compute
\begin{align*}
 &\frac{d}{dt} \left( 
  \int_{\R^2} e^{\eta} |\pa \psi|^2 dx
 \right)\\
 &=
 \int_{\R^2} 
   \bigl(e^{\eta} (\pa_t \eta)|\pa \psi|^2
   +
   2e^{\eta} \bigl\{
   (\pa_t \psi)(\pa_t^2 \psi) + (\nabla_x \psi)\cdot (\nabla_x \pa_t\psi)
  \bigr\}\bigr) dx\\
 &=
 2\int_{\R^2} e^{\eta} (\dal \psi)(\pa_t \psi) dx
 +
 \int_{\R^2} e^{\eta} 
  \bigl\{
   (\pa_t \eta)|\pa \psi|^2 -2(\nabla_x \eta)\cdot (\nabla_x \psi)\pa_t \psi
  \bigr\}
 dx
 \\
 &=
 2\int_{\R^2} e^{\eta} G \pa_t\psi  dx
 -
 \int_{\R^2} e^{\eta} \frac{|Z\psi|^2}{\jb{t-|x|}^{\rho}} dx.
\end{align*}
By the integration with respect to $t$, we have
\begin{align*}
 &\int_{\R^2} e^{\eta(t,x)} |\pa \psi(t,x)|^2 dx
 + 
 \int_0^t \int_{\R^2} 
  e^{\eta(\tau,x)} \frac{|Z\psi(\tau,x)|^2}{\jb{\tau-|x|}^{\rho}} 
 dx d\tau\\
 &= 
 \int_{\R^2} e^{\eta(0,x)} |\pa \psi(0,x)|^2 dx
 + 
 2\int_0^t \int_{\R^2} e^{\eta(\tau,x)} G(\tau,x)(\pa_t \psi)(\tau,x) dx d\tau.
\end{align*}
With the aid of \eqref{eq_ghost}, we arrive  at the desired estimate. 
\qed



\end{document}